\theoremstyle{plain}
\newtheorem{thm}{Theorem}
\Crefname{thm}{Theorem}{Theorems}
\numberwithin{thm}{section}
\newtheorem{lem}[thm]{Lemma}
\Crefname{lem}{Lemma}{Lemmas}
\newtheorem{claim}{Claim}
\crefname{claim}{Claim}{Claims}
\Crefname{claim}{Claim}{Claims}
\newtheorem{prop}[thm]{Proposition}
\Crefname{prop}{Proposition}{Propositions}
\newtheorem{cor}[thm]{Corollary}
\crefname{cor}{Corollary}{Corollaries}
\newtheorem{conj}[thm]{Conjecture}
\crefname{conj}{Conjecture}{Conjectures}
\Crefname{ex}{Example}{Examples}
\theoremstyle{definition}
\newtheorem{defn}[thm]{Definition}
\Crefname{defn}{Definition}{Definitions}
\newtheorem*{defn*}{Definition}
\newtheorem{nremark}[thm]{Remark}
\Crefname{nremark}{Remark}{Remarks}
\renewenvironment{proof}[1][]{\begin{trivlist}
\item[\hspace{\labelsep}{\bf\noindent Proof#1.\/}] }{\qed\end{trivlist}}
\newcommand{\eps}{\varepsilon}
\newcommand{\pr}{\mathbb{P}}
\newcommand{\M}{\mathcal{M}}
\def\expandafter\normalsize\expandafter{%
    \normalsize
    \setlength\abovedisplayskip{4pt}
    \setlength\belowdisplayskip{4pt}
    \setlength\abovedisplayshortskip{4pt}
    \setlength\belowdisplayshortskip{4pt}
}
\newcommand{\EE}{\mathbb{E}}
\title{\vspace{-0.8cm} Large induced matchings in random graphs}
\author{Oliver Cooley\footnotemark[1]
\and
Nemanja Dragani\'c\footnotemark[2] 
\and
Mihyun Kang\footnotemark[1]
\and
Benny Sudakov\footnotemark[2] \footnotemark[3]
}
 \date{}
\begin{document}
\maketitle

\renewcommand{\thefootnote}{\fnsymbol{footnote}}

 \footnotetext[1]{Institute of Discrete Mathematics, Graz University of Technology, Steyrergasse 30, 8010 Graz, Austria. Emails: \href{mailto:cooley@math.tugraz.at} {\nolinkurl{cooley@math.tugraz.at}},
 \href{mailto:kang@math.tugraz.at} {\nolinkurl{kang@math.tugraz.at}}.
Supported by Austrian Science Fund (FWF): I3747.}

\footnotetext[2]{Department of Mathematics, ETH, Z\"urich, Switzerland. Emails:
\href{mailto:nemanja.draganic@math.ethz.ch} {\nolinkurl{nemanja.draganic@math.ethz.ch}},
\href{mailto:benjamin.sudakov@math.ethz.ch} {\nolinkurl{benjamin.sudakov@math.ethz.ch}}.}

\footnotetext[3]{Research supported in part by SNSF grant 200021\_196965.}

\renewcommand{\thefootnote}{\arabic{footnote}}

\begin{abstract}
    Given a large graph $H$, does the binomial random graph $G(n,p)$ contain a copy of $H$ as an induced subgraph with high probability? This classic question has been studied extensively for various graphs $H$,
    going back to the study of the independence number of $G(n,p)$ by
    Erd\H{o}s and Bollob\'as, and Matula in 1976.
    In this paper we prove an asymptotically best possible result for induced matchings by showing that if  $C/n\le p \le 0.99$ for some large constant $C$, then $G(n,p)$ contains an induced matching of order approximately $2\log_q(np)$,
    where $q= \frac{1}{1-p}$.
\end{abstract}

\section{Introduction}

Let $G(n,p)$ denote the binomial random graph on vertex set
$[n]:= \{1,2,\ldots,n\}$, where each edge is included independently with probability $p$. The following classic question has been extensively studied in the theory of random graphs:
given a large graph $H$, does $G(n,p)$ contain a copy of $H$ as an induced subgraph with high probability (abbreviated to whp, meaning with probability tending to $1$ as $n$ tends to infinity)? One of the first instances of this problem is determining the independence number of $G(n,p)$, i.e.\ when $H=H_k$ is an empty graph on $k$ vertices,
how large can $k$ be such that $H$ is an induced subgraph of $G(n,p)$. The study of this particular instance
dates back to 1976 when Bollob\'as and Erd\H{o}s~\cite{bollobas1976cliques}
and Matula~\cite{matula1976largest} showed that the independence number of $G(n,p)$ for constant $p$ is asymptotically
$2\log_q(np)$, where $q=1/(1-p)$. A simple first moment argument shows that the size of this empty subgraph is asymptotically largest possible.
Frieze~\cite{frieze1990independence} extended this result to the sparse regime, when $p=c/n$ for a large enough constant $c$, with the same expression $2\log_q(np)$ for the asymptotic size of the largest independent set. Indeed, it can be shown that the same result holds for every $p=p(n)\gg 1/n$ (see e.g.~\cite{janson2011random}).

\par Another classic result which deals with non-empty induced subgraphs of $G(n,p)$ 
is due to Erd\H{o}s and Palka~\cite{erdos1983trees}. They showed that whp the largest induced tree in $G(n,p)$ is asymptotically of size $2\log_q(np)$ if $p$ is a constant. Furthermore, they conjectured that the largest induced tree in the sparse regime (when $p=c/n$ for a large constant $c$) is of linear size. Frieze and Jackson \cite{frieze1987large2}, Ku{\v{c}}era and R{\"o}dl~\cite{kuvcera1987large}, {\L}uczak and Palka~\cite{luczak1988maximal}, and de la Vega~\cite{de1986induced} independently proved this conjecture. Subsequently, 
for $p=c\ln n/n$, where $\ln$ denotes the natural logarithm,
Palka and Ruci{\'n}ski~\cite{palka1986order} showed that the largest induced tree is of size between $\log_q(np)$ and $2\log_q(np)$.
Finally, de la Vega~\cite{de1996largest} showed that for $p=c/n$ the largest induced tree has size asymptotically $2\log_q(np)\sim 2\frac{\ln c}{c}n$. Although, de la Vega proves his result only for $p=c/n$, one can check that his ideas extend to any larger $p$ as well. 

Note that the fact that $G(n,p)$ contains large induced trees does not give much information on what these trees look like. Therefore, a natural question is whether a given {\em fixed} large tree is an induced subgraph of $G(n,p)$. The first steps in this direction were made by Frieze and Jackson~\cite{frieze1987large} and Suen~\cite{suen1992large}, who showed that the length of the longest induced path in $G(n,c/n)$ is linear for $c$ large enough. {\L}uczak~\cite{luczak1993size} improved upon their results, by proving that the length of the longest induced path in $G(n,c/n)$ is between $\log_q(np)$ and $2\log_q(np)$. For constant $p$, Ruci{\'n}ski~\cite{rucinski1987induced} showed that the longest induced path in $G(n,p)$ is of length asymptotically $2\log_q(np)$, which was later extended to all $p\geq n^{-1/2}(\ln n)^2$
by Dutta and Subramanian in~\cite{dutta2018induced}.

When $p$ is small, it is natural to require some restriction
on the maximum degree $\Delta(H)$ of $H$, simply because
whp $G(n,p)$ does not have any vertices of large degree.
In particular, when $p=c/n$ then a natural case to study is when $H$ is a tree of bounded degree, i.e.\ a tree with $\Delta(H)<d$ for some constant $d$. The second author \cite{draganic2020large} proved that for $n^{-1/2}(\ln n)^2 \le p \le 0.99$ and every fixed bounded degree tree $T$ of size asymptotically $2\log_q(np)$, $G(n,p)$ contains $T$ with high probability.
On the other hand, in sparser random graphs very little is known about induced
copies of general large bounded degree trees.

Another natural class of induced subgraphs to look for in $G(n,p)$ is induced matchings, which are in some sense an interpolation between independent sets and trees. For constant $p$, it has been shown by Clark~\cite{clark2001strong} that whp $G(n,p)$ contains induced matchings with $(2\pm o(1))\log_q(np)$ vertices. 

In this paper we establish the following result on induced matchings, stating that the largest induced matching in $G(n,p)$ contains roughly $2\log_q(np)$ vertices, which is an asymptotically optimal result.\\
\begin{thm}\label{main}
For all $\varepsilon_0>0$ there exists $C=C(\varepsilon_0)>0$ such that 
whp the largest induced matching in $G(n,p)$ contains ${(1 \pm \varepsilon_0)}{\log_q (np) }$ edges, where $q=\frac{1}{1-p}$, whenever $\frac{C}{n}\leq p\leq 0.99$.
\end{thm}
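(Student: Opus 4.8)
The plan is to prove matching upper and lower bounds on the size of the largest induced matching in $G(n,p)$, denoted $\nu_{\mathrm{ind}}$. For the upper bound, a first moment computation suffices: the expected number of induced matchings with $k$ edges is at most $\binom{n}{2k}\cdot\frac{(2k)!}{2^k k!}\cdot p^k (1-p)^{\binom{2k}{2}-k}$, and choosing $k = (1+\eps_0)\log_q(np)$ makes this tend to $0$. The dominant contribution to the failure probability comes from the factor $q^{-\binom{2k}{2}+k} = q^{-2k^2 + O(k)}$ against $n^{2k}p^k$, and one checks that $2k^2$ edges within a set of $2k$ vertices is what forces the threshold down to roughly $\log_q(np)$ (rather than $2\log_q(np)$ as for independent sets, where only $\binom{2k}{2}$-style non-edge constraints and no edge constraints appear — here we additionally pay $p^k$ for the $k$ required edges, which is negligible, but we save nothing on the non-edges, so the exponent of $q$ is the binding constraint and gives $k \sim \log_q(np)$). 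This part is routine and should occupy only a paragraph.

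The substance of the proof is the lower bound: whp $G(n,p)$ contains an induced matching with at least $(1-\eps_0)\log_q(np)$ edges. The natural first attempt — a second moment argument on the number of induced matchings of the target size — is well known to fail in the sparse regime for the same reason it fails for the independence number: the variance is too large because of pairs of matchings sharing many vertices. So instead I would adapt the approach used by Frieze for independent sets in sparse random graphs (and refined in subsequent work on induced paths and trees): rather than counting, use a greedy/absorption or a concentration-via-martingale argument to show that the independence-number-style second moment heuristic can be made to work on a suitable subgraph, or alternatively expose the graph in rounds. Concretely, I would set $k := (1-\eps_0)\log_q(np)$ and attempt to build the induced matching edge by edge: maintain a set $U$ of "still-available" vertices (initially all of $[n]$), and at each step pick an edge inside $U$, add it to the matching, then delete from $U$ both endpoints together with all of their neighbours. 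The key point is to control how fast $U$ shrinks: deleting the closed neighbourhood of two vertices removes about $2np$ vertices, so after $k$ steps we have removed only about $2knp = o(n)$ vertices provided $np$ is not too large, leaving $U$ large enough that it still contains an edge (indeed still has positive density). For $p$ bounded away from $1$ but possibly large (up to $0.99$), $2knp$ could be comparable to $n$, so in that regime one needs the more careful sparsification of Dutta–Subramanian / Draganić type, exposing a random sparse subset of vertices of size $\sim n/(np)^{1/2}$ or so, or exposing edges in two rounds.

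A cleaner way to organize the lower bound, which I would actually pursue, is a two-round exposure combined with a sprinkling argument. In the first round, expose $G(n,p_1)$ with $p_1$ slightly below $p$ and find within it a large collection of vertex-disjoint "candidate" pairs $\{x_i,y_i\}$ that are pairwise non-adjacent and such that no $x_i y_j$ or $y_i y_j$ edge is present — but we do not yet require $x_i y_i$ to be an edge; rather we arrange that there are many such disjoint non-edge-free configurations, essentially by finding a large induced independent set $I$ of size $\sim 2\log_q(np)$ (which exists whp by the classical result quoted in the introduction) and then pairing up its vertices. In the second round, sprinkle the remaining edges with probability $p_2 \approx p - p_1$: for each candidate pair independently, the pair becomes an edge with probability $\approx p_2$, and since the pairs are disjoint these events are independent, so whp a $p_2$-fraction of the $\log_q(np)$ pairs turn into edges, yielding an induced matching of size $\approx p_2 \log_q(np)$. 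Tuning $p_1, p_2$ so that $p_2$ is a constant fraction close to... — wait, this only gives a constant fraction, not $(1-\eps_0)$. To recover the full $(1-\eps_0)\log_q(np)$ I would instead iterate or, better, directly run the greedy argument on $G(n,p)$ itself with a careful stochastic-domination bound on the number of available edges at each step, using the fact that at step $i \le k$ the set $U$ still has $\ge n - 2i\cdot 2np \ge n/2$ vertices (when $np \le \sqrt n/\log n$, say) and the edges inside $U$ behave like a random graph, so an edge is found with probability $1 - o(1/k)$; for larger $np$, pass to the subgraph induced on a random vertex subset of size $m := \lceil 2n/(np)^{1/2}\rceil \cdot$ (constant), inside which the effective edge probability is still $p$ but the degrees drop, making the greedy deletion harmless while $m$ is still large enough that $\log_q(mp) = (1-o(1))\log_q(np)$.

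\medskip

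The hard part will be making the greedy argument yield the sharp constant $(1-\eps_0)$ rather than merely $(1-\eps_0)/2$ or some other constant: naively, deleting the \emph{entire} neighbourhood of an added edge is wasteful, and one must argue that the "density" of available edges stays essentially $p$ throughout, so that the process does not stall before reaching $k \sim \log_q(np)$ edges — this is exactly where the analogy with the independence number (where the analogous greedy process reaches $2\log_q(np)$) must be traded against the extra cost of insisting each selected pair be an \emph{edge}, and a careful accounting shows the loss is exactly a factor of roughly $2$ inside the logarithm, consistent with Theorem~\ref{main}. Controlling the dependencies introduced by conditioning on the history of the process — in particular, that the graph induced on the current available set $U$ is still distributed like $G(|U|, p)$ conditioned on mild events — will require either a careful deferred-exposure bookkeeping or an auxiliary concentration inequality (Azuma on the vertex-exposure martingale), and getting the error terms to beat the target window $\eps_0 \log_q(np)$ uniformly over the whole range $C/n \le p \le 0.99$ is the main technical obstacle.
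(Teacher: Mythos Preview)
Your upper bound sketch is fine and matches the paper. The lower bound, however, contains a fatal arithmetic error that undermines the whole greedy plan. You write that after $k$ steps the greedy process has removed ``about $2knp = o(n)$ vertices''. But with $k=(1-\eps_0)\log_q(np)\sim (1-\eps_0)\frac{\ln(np)}{p}$ you get
\[
2k\cdot np \;\sim\; 2(1-\eps_0)\,n\ln(np),
\]
which is $\omega(n)$, not $o(n)$. The available set $U$ is exhausted after roughly $n/(2np)=1/(2p)$ steps, which is only a $\Theta(1/\ln(np))$ fraction of the target $k$. This is exactly the well-known failure of the greedy algorithm for independent sets (it finds $\sim 1/p$ vertices, not $\sim 2(\ln(np))/p$), and passing to a random vertex subset does not help: on a set of size $m$ the same arithmetic gives $\sim 1/(2p)$ edges, independent of $m$. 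Your sprinkling variant is even weaker, as you yourself notice: the fraction of paired independent vertices that become edges is $\Theta(p)=o(1)$, not a constant.

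More conceptually, you misidentify Frieze's method. It is \emph{not} a greedy or deferred-exposure construction; it is precisely the combination you dismissed --- a second moment computation together with a concentration inequality. The paper follows this template: the second moment does not show $\Pr(Y_k>0)=1-o(1)$ in the sparse regime, but a careful bound on $\EE[Y_k^2]/\EE[Y_k]^2$ (summing over all overlap patterns of two matchings) still yields the weak estimate $\Pr(Y_k>0)\ge \exp(-n/c)$ via Paley--Zygmund. Separately, Talagrand's inequality applied to $\M(G)$ (viewing $G(n,p)$ as a product over vertex-neighbourhood variables, so that $\M$ is Lipschitz and $f$-certifiable with $f(m)=2m$) gives
\[
\Pr\!\left(\M< k-\eps\tfrac{\ln c}{c}n\right)\cdot\Pr(\M\ge k)\;\le\;\exp(-2n/c).
\]
Dividing through by the weak lower bound on $\Pr(\M\ge k)$ then gives $\Pr(\M< k-\eps\tfrac{\ln c}{c}n)\le \exp(-n/c)=o(1)$. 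The genuine technical work is the second moment estimate, which requires classifying pairs of matchings by the number $\ell$ of shared edges and $s$ of single shared vertices and bounding the resulting double sum; this is where the effort goes, not in any algorithmic construction.
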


As described above, the size of largest independent sets in random graphs is well understood,
and there are also several known results on the size of the largest induced tree for various regimes of $p$,
as well as for the largest induced matching when $p$ is constant.
However, for a \emph{fixed} (i.e.\ previously specified) induced bounded degree tree in the sparse regime, we know very little. Our result for induced matchings is the first step in understanding this problem.

For $p>\frac{(\ln n)^2}{\sqrt{n}}$ the aforementioned results for independent sets, paths, bounded degree trees, and matchings can be proved using the second moment method.  On the other hand, the vanilla second moment calculations break down roughly when $p\sim\frac{1}{\sqrt{n}}$ and for smaller $p$ all of the aforementioned problems become significantly harder.

Our proof relies on two main ingredients --- the second moment method and Talagrand's inequality. Although the second moment method on its own is of little use in sparse regimes, in combination with strong concentration bounds (such as Talagrand's inequality) it yields a nice tool which can be very powerful, as was already demonstrated by Frieze in~\cite{frieze1990independence}.

\section{Large induced matching: proof of Theorem~\ref{main}}

Note that for constant $p$, \Cref{main} is exactly the aforementioned result of Clark~\cite{clark2001strong}.
In this section we will prove \Cref{main} for $p\le\frac{1}{(\ln n)^3}$. For  $\frac{1}{(\ln n)^3} \le p = o(1)$ it is enough to use just a standard second moment argument with Chebyshev's inequality,
whose calculations are much simpler than those required in the sparser case --- we include a proof in
Appendix~\ref{app:secondmoment} for completeness.

Throughout the paper we will use the standard Landau notations 
$o(\cdot),O(\cdot),\Theta(\cdot),\Omega(\cdot),\omega(\cdot)$.
When not otherwise explicitly stated, the asymptotics in this
notation are with respect to $n$. We will also use this notation
with asymptotics with respect to $c$, in which case we add $c$
explicitly to the notation. For example, $f= o_c(g)$ means that
$|f|/|g| \xrightarrow{c \to \infty} 0$. We will omit
floors and ceilings when these do not significantly affect the argument.

Given a graph $H$, we denote by $\M(H)$ the size of (i.e.\ the number of edges in)
the largest induced matching in $H$.
We want to show that $\M(G(n,p)) \geq {(1-\varepsilon_0)}{\log_q (np) }$ with high probability. Note that if $p=o(1)$ then this is asymptotically equal to $(1-\varepsilon_0)\frac{\ln (np)}{p}$, so we will work with the latter expression to ease notation.

We use the notation $a \gg b$ to mean that for some implicit function $f:\mathbb{R}\to \mathbb{R}$, 
we have $a \ge f(b)$. We will not determine the function $f$ that we require explicitly, although it could be deduced from a careful analysis of the calculations.
For the rest of the paper, we
fix the following parameters.
Let $\eps_0>0$, let $\eps := \frac{\eps_0}{3}$ and let
$p=c/n$ where $c=c(n)$ is a function of $n$ satisfying
$\frac{n}{(\ln n)^3} \ge c \gg \varepsilon^{-1}$. 
Let
\begin{equation}\label{size}
k:=\frac{(1-\eps)\ln c}{c}n.
\end{equation}

Let $G\sim G(n,p)$ and let $Y_k$ be the random variable
which counts the number of induced matchings of size $k$ in $G$. 
We will prove two lemmas which directly imply our theorem.
The first lemma tells us that $\M(G)$ is well concentrated
in the sense that it cannot have both upper and lower tail having
large probability.
\begin{lem}\label{probability1}
\begin{equation*}
    \pr\left(\M(G)\leq k- \varepsilon\frac{\ln c}{c}n\right)
    \cdot \pr\Big(\M(G)\geq k\Big) \leq \exp\left(-\frac{2n}{c}\right).
\end{equation*}
\end{lem}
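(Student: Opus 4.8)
The plan is to use Talagrand's inequality to show that the induced matching number $\M(G)$ is tightly concentrated around its median, which immediately yields the claimed product bound. The key observation is that $\M$ is a reasonably well-behaved function of the edges of $G$: it is $1$-Lipschitz with respect to edge changes (adding or removing one edge changes $\M$ by at most $1$, since an induced matching either survives or loses at most one edge, and a removed edge can only help), and it is $f$-certifiable in the sense required by Talagrand --- if $\M(G) \ge s$, this is witnessed by the presence of $s$ specific edges forming the matching together with the absence of the at most $\binom{2s}{2} - s = 2s^2 - 2s$ edges among the $2s$ endpoints that would spoil inducedness. So a certificate for the event $\{\M \ge s\}$ involves at most $f(s) := 2s^2 - s$ coordinates.

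**Then I would** invoke the standard consequence of Talagrand's inequality in the ``$f$-certifiable, Lipschitz'' form (as in Molloy--Reed / Alon--Spencer): if $X$ is $c$-Lipschitz and $f$-certifiable and $m$ is a median of $X$, then for any $t$,
\begin{equation*}
\pr(X \le m - t)\,\pr(X \ge m) \le \exp\!\left(-\frac{t^2}{4 c^2 f(m)}\right),
\end{equation*}
and symmetrically for the upper tail. Taking $X = \M(G)$, $c = 1$, and $f(m) = 2m^2 - m = O(m^2)$, with $m$ the median, one gets a bound of the shape $\exp(-\Omega(t^2/m^2))$ for deviations of size $t$ below the median. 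The trick is then to choose $t$ appropriately and use a union-bound-style manipulation to convert the two-median inequality into the stated two-point inequality about $k$ and $k - \eps \frac{\ln c}{c} n$.

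**The one subtlety** is that the clean product inequality in the lemma is stated about the specific value $k$ and the specific deviation $\eps \frac{\ln c}{c} n$, not about the (a priori unknown) median $m$. To handle this, split into cases according to where $m$ lies relative to $k$. If $m \ge k$, then $\pr(\M \ge m) \le \pr(\M \ge k)$ is not the useful direction --- instead use $\pr(\M \le k - \eps\frac{\ln c}{c}n) \le \pr(\M \le m - t)$ with $t := \eps\frac{\ln c}{c}n$ (valid since $k - \eps\frac{\ln c}{c}n \le m - t$), and pair it with $\pr(\M \ge m) \ge \pr(\M \ge k) \cdot [\text{trivially } \ge]$... more carefully: from Talagrand, $\pr(\M \le m - t) \le \exp(-t^2/(4 f(m)))/\pr(\M \ge m)$, and since $\{\M \ge m\} \supseteq \{\M \ge k\}$ when $m \le k$ we get the bound directly; the case $m > k$ is handled by the symmetric upper-tail version applied to the event $\{\M \ge k\} \subseteq \{\M \ge m - t'\}$ for suitable $t'$. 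In every case the exponent is of order $-(\eps \frac{\ln c}{c} n)^2 / f(m)$, and since any relevant median $m$ is $O(\frac{\ln c}{c} n) = O(k)$ (which follows from the easy first-moment upper bound on $\M$, or can be fed in as an a priori bound), this exponent is $\Omega(n/c)$ with room to spare for the constant $2$, using $c \gg \eps^{-1}$.

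**The main obstacle** I anticipate is not any single deep step but rather getting the bookkeeping exactly right: verifying the Lipschitz and certifiability constants with the correct values of $f(m)$, pinning down which median to use and ensuring the case analysis covers $m$ on both sides of $k$ cleanly, and confirming that the constant in the exponent genuinely beats $2$ given only $c \gg \eps^{-1}$ and $c \le n/(\ln n)^3$ --- the latter requires knowing $m = O(\frac{\ln c}{c} n)$, so I would first record a one-line first-moment argument that $\M(G) < (1+\eps)\frac{\ln c}{c} n$ whp (hence the median is at most this), and only then apply Talagrand.
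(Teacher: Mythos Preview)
Your approach has a genuine gap in the certifiability bound. Working in the edge-indicator product space, a certificate for $\{\M \ge m\}$ must pin down all $\binom{2m}{2}$ pairs inside the witnessing set, so $f(m) = \Theta(m^2)$. Plugging this into Talagrand with deviation $t = \eps\frac{\ln c}{c}n$ and $m$ of order $k = (1-\eps)\frac{\ln c}{c}n$ gives an exponent
\[
\frac{t^2}{4f(m)} \;=\; \Theta\!\left(\frac{\bigl(\eps\frac{\ln c}{c}n\bigr)^2}{k^2}\right)
\;=\; \Theta\!\left(\frac{\eps^2}{(1-\eps)^2}\right),
\]
which is a \emph{constant}, not $\Omega(n/c)$. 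Since $n/c \ge (\ln n)^3 \to \infty$, a bound of the form $\exp(-\Theta(1))$ is nowhere near the required $\exp(-2n/c)$. Your claim that ``this exponent is $\Omega(n/c)$ with room to spare'' is simply false with $f(m)=O(m^2)$.

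The paper avoids this by changing the product structure: it views $G(n,p)$ as a product of $n-1$ coordinates $Z_1,\ldots,Z_{n-1}$, where $Z_i$ records the set of neighbours of vertex $i+1$ inside $[i]$. In this representation $\M$ is still $1$-Lipschitz (altering the back-neighbourhood of a single vertex changes $\M$ by at most one), but now a witness set $S$ of $2m$ vertices is certified by fixing only the coordinates $\{Z_{i-1}:i\in S\}$, since every pair inside $S$ is determined by the coordinate of its larger endpoint. Hence $f(m)=2m$, and the exponent becomes
\[
\frac{t^2}{4f(k)} \;=\; \frac{\eps^2(\ln c)^2 n^2}{8c^2 k}
\;=\; \frac{\eps^2}{8(1-\eps)}\cdot\frac{(\ln c)\,n}{c}
\;\ge\; \frac{2n}{c}
\]
once $c$ is large enough in terms of $\eps$. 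Incidentally, the Alon--Spencer form of Talagrand used in the paper already gives the product inequality $\pr(X<b-\lambda\sqrt{f(b)})\cdot\pr(X\ge b)\le e^{-\lambda^2/4}$ for \emph{any} $b$, so no median case analysis is needed either.
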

We will prove \Cref{probability1} in \Cref{sec:azuma} using 
an application of Talagrand's inequality.

The second lemma gives a rather weak estimate on the probability that a large matching occurs,
but which in combination with \Cref{probability1} is enough to show \Cref{main}.
\begin{lem}\label{probability2}
\begin{equation*}
    \pr\left(Y_{k}>0\right)\geq \exp\left(-\frac{n}{c}\right).
\end{equation*}
\end{lem}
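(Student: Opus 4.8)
The plan is to use a second moment argument combined with the concentration inequality in \Cref{probability1}. Recall that $Y_k$ counts induced matchings of size $k$ in $G\sim G(n,p)$, with $k = \frac{(1-\eps)\ln c}{c}n$. We must establish the crude bound $\pr(Y_k>0)\geq \exp(-n/c)$.

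First I would compute the first moment $\EE Y_k$. An induced matching of size $k$ is specified by choosing $2k$ vertices, pairing them into $k$ edges, and requiring that all $k$ matching edges are present while all $\binom{2k}{2}-k$ other pairs among these vertices are absent. Thus
\begin{equation*}
\EE Y_k = \binom{n}{2k}\frac{(2k)!}{2^k k!}\, p^k (1-p)^{\binom{2k}{2}-k}.
\end{equation*}
Since $k = \Theta\!\big(\tfrac{n\ln c}{c}\big)$ and $p=c/n$, a routine estimate (using $\log(1/(1-p)) = p + O(p^2)$ and $c \gg \eps^{-1}$, $c \le n/(\ln n)^3$) shows that $\ln \EE Y_k$ is large and positive; in fact one expects $\ln \EE Y_k = \Theta(k\ln(np)) = \Theta\!\big(\tfrac{n(\ln c)^2}{c}\big)$, which comfortably exceeds $n/c$. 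The choice of $k$ with the factor $(1-\eps)$ (rather than $1$) is precisely what gives this slack.

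The main obstacle is that the second moment $\EE Y_k^2$ is \emph{not} close to $(\EE Y_k)^2$ in the sparse regime $p \sim 1/\sqrt n$ and below, so Chebyshev's inequality (the Paley–Zygmund bound) fails to give $\pr(Y_k>0) = \Omega(1)$. Instead, I would settle for the much weaker conclusion that $\pr(Y_k > 0) \ge (\EE Y_k)^2 / \EE Y_k^2$ and show only that this ratio is at least $\exp(-n/c)$. Concretely, I would write $\EE Y_k^2 = \sum_{j=0}^{k} (\text{contribution from pairs of induced matchings sharing }2j\text{ vertices in }j\text{ common edges})$, bounding the non-canonical overlaps as well, and control the ratio $\EE Y_k^2 / (\EE Y_k)^2$ termwise. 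Each overlap term, relative to $(\EE Y_k)^2$, contributes roughly $\binom{2k}{2j}\binom{n-2k}{2k-2j}/\binom{n}{2k} \cdot (1-p)^{-\binom{2j}{2}+j}\cdot p^{-j}$ type factors; the dangerous terms are those with small $j>0$, where the factor $p^{-j}(1-p)^{-\Theta(j^2)}$ can be as large as $\exp(\Theta(j \cdot k p)) = \exp(\Theta(j\ln c))$ but is multiplied by a small probability $\sim (k^2/n)^{?}$ of overlap. Summing over $j$, the total is at most $\exp(O(n/c))$ — and by adjusting the constant $C = C(\eps_0)$ large enough one makes the $O(\cdot)$ constant smaller than $1$, yielding $\EE Y_k^2 \le \exp(n/c)\,(\EE Y_k)^2$, hence $\pr(Y_k>0) \ge \exp(-n/c)$.

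Putting it together: the bottleneck step is the careful bookkeeping of the overlap sum $\EE Y_k^2$, showing that although it is exponentially larger than $(\EE Y_k)^2$, the exponent is only $O(n/c)$ and can be forced below $n/c$ by taking $c$ (equivalently $C$) large. The first-moment computation and the final division are routine. One technical point to watch is handling the ``non-induced'' overlaps — pairs of induced matchings whose vertex sets intersect but whose edge sets are not nested — which must also be bounded; these are typically smaller and can be absorbed into the same estimate. Once \Cref{probability1} and \Cref{probability2} are both in hand, \Cref{main} follows: if $\pr(\M(G)\ge k)$ were not $\to 1$, then combined with $\pr(Y_k>0)\ge e^{-n/c}$ and the product bound of \Cref{probability1} one derives a contradiction for the lower bound, and the upper bound $(1+\eps_0)\log_q(np)$ comes from a standard first-moment argument.
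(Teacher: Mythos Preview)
Your approach is essentially the paper's: apply the Paley--Zygmund bound $\pr(Y_k>0)\ge (\EE Y_k)^2/\EE[Y_k^2]$ and show that $\EE[Y_k^2]/(\EE Y_k)^2 \le \exp(n/c)$ via an overlap decomposition, accepting that the ratio is exponentially large in the sparse regime but with exponent only $o_c(1)\cdot n/c$. Two corrections are in order.

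First, the opening sentence is misleading: the proof of \Cref{probability2} does \emph{not} use \Cref{probability1}. The two lemmas are proved independently and are only combined afterwards in the proof of \Cref{main}.

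Second, and more substantively, your overlap bookkeeping is too coarse. You parameterise by a single integer $j$ (shared edges) and dismiss the ``non-induced'' overlaps as ``typically smaller and can be absorbed''. The paper in fact requires a \emph{two}-parameter decomposition: for a matching $M_i$ compatible with $M_1$ one records both $\ell$, the number of common pairs, and $s$, the number of pairs of $M_i$ having exactly one endpoint in $V(M_1)$. The terms $b_{\ell,s}$ with $s>0$ are \emph{not} dominated by those with $s=0$; bounding the contribution of $s$ (the function $F_2$ in the paper's notation) is a separate argument, just as delicate as the one for $\ell$, with its own case split according to whether $s$ is above or below $k/(c\ln c)$ and a further split on an auxiliary parameter $\alpha=cs/(k\ln c)$. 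The claim that these overlaps ``can be absorbed into the same estimate'' is precisely where most of the technical work of the paper lies, and your proposal gives no indication of how to carry it out.
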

The proof of \Cref{probability2} is based on the second moment method using the Paley-Zygmund inequality
and appears in \Cref{sec:secondmoment}.

Now we show how \Cref{main} follows from these two lemmas. We begin with a simple
first moment calculation. Analogously to $Y_k$, for any $r\in \mathbb{N}$ let $Y_r$ denote the number of
induced matchings of size $r$ in $G$.

\begin{claim}\label{claim:firstmoment}
For any positive integer $r$, we have
$$\EE[Y_r]=\binom{n}{2r}\frac{(2r)!}{r!2^r}p^r(1-p)^{\binom{2r}{2}-r}.$$
\end{claim}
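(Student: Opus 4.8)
The claim is a routine first-moment computation for the random variable $Y_r$ counting induced matchings of size $r$ in $G\sim G(n,p)$. The plan is to compute $\EE[Y_r]$ by linearity of expectation: express $Y_r$ as a sum of indicator variables over all potential induced matchings of size $r$, and then count those and multiply by the probability that a fixed such configuration is realized.

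\begin{proof}[of \Cref{claim:firstmoment}]
First I would enumerate the candidate configurations. An induced matching of size $r$ in $G$ is determined by choosing a set of $2r$ vertices together with a perfect matching on them (i.e.\ a partition of these $2r$ vertices into $r$ unordered pairs). The number of ways to choose the $2r$ vertices is $\binom{n}{2r}$, and the number of perfect matchings on a fixed set of $2r$ vertices is $\frac{(2r)!}{r!\,2^r}$ (order the $2r$ vertices in $(2r)!$ ways, then quotient by the $2^r$ orderings within the $r$ pairs and the $r!$ orderings of the pairs). For each such candidate $M$, let $\mathbf{1}_M$ be the indicator that $M$ is an induced matching in $G$: that is, that all $r$ designated pairs are edges of $G$ \emph{and} that none of the remaining pairs among the $2r$ vertices is an edge of $G$. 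Then $Y_r = \sum_M \mathbf{1}_M$, where the sum ranges over all such candidates.

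Next I would compute $\pr(\mathbf{1}_M = 1)$ for a fixed candidate $M$. There are $\binom{2r}{2}$ pairs among the chosen $2r$ vertices; exactly $r$ of them must be present (each with probability $p$, independently) and the remaining $\binom{2r}{2}-r$ must be absent (each with probability $1-p$, independently). Since edges of $G(n,p)$ are independent, $\pr(\mathbf{1}_M=1) = p^r (1-p)^{\binom{2r}{2}-r}$, and crucially this probability does not depend on the choice of $M$. By linearity of expectation,
\[
\EE[Y_r] = \sum_M \pr(\mathbf{1}_M=1) = \binom{n}{2r}\frac{(2r)!}{r!\,2^r}\, p^r (1-p)^{\binom{2r}{2}-r},
\]
which is exactly the claimed formula.
\end{proof}

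There is no real obstacle here: the only point requiring a small amount of care is the combinatorial count $\frac{(2r)!}{r!\,2^r}$ of perfect matchings on $2r$ labeled vertices, and the bookkeeping that the edges ``within'' the configuration must be present while all other pairs among the $2r$ vertices must be absent — this second condition is precisely what makes the matching \emph{induced}. The independence of edges in $G(n,p)$ makes the probability factorize cleanly.
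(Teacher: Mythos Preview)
Your proof is correct and follows essentially the same approach as the paper: count the $\binom{n}{2r}\frac{(2r)!}{r!2^r}$ potential matchings, observe that each is an induced matching with probability $p^r(1-p)^{\binom{2r}{2}-r}$, and conclude by linearity of expectation.
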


\begin{proof}
There are $\binom{n}{2r}(2r-1)!!=\binom{n}{2r}\frac{(2r)!}{r!2^r}$
possible matchings $M$ of size $r$. Furthermore, in order for
$M$ to form an induced matching in $G(n,p)$, all $r$ edges must be present,
and furthermore the remaining $\binom{2r}{2}-r$ pairs in $V(M)$ may not be
edges of $G(n,p)$, which occurs with probability $p^r (1-p)^{\binom{2r}{2}-r}$.
Combining these two terms gives the claim.
\end{proof}

\begin{proof}[ of \Cref{main}]
We first prove the upper bound using the first moment method.
Setting
$$
r=(1+\eps_0)\log_q(np) = (1+\eps_0) \frac{\ln (np)}{-\ln(1-p)} = \Theta(1) \frac{\ln(np)}{p}
$$
(where the last equality follows since $p\le 0.99$),
let us observe that
\Cref{claim:firstmoment} gives
\begin{align*}
\EE[Y_r] = \binom{n}{2r}\frac{(2r)!}{r!2^r}p^r(1-p)^{\binom{2r}{2}-r}
& \le \frac{n^{2r}}{(2r)!}\frac{(2r)!}{(2r/e)^r}p^r (1-p)^{r(2r-2)}\\
& = \left(\Theta(1) \frac{n^2p(1-p)^{2r}}{r}\right)^r\\
& = \left(\Theta(1) \frac{n^2p(np)^{-2(1+\eps_0)}}{(\ln (np))/p}\right)^r\\
& = \left(\Theta(1) \frac{(np)^{-2\eps_0}}{\ln (np)}\right)^r.
\end{align*}
Now if $p=\Theta\left(n^{-1}\right)$, then $r\ge \log_q(np) = \frac{\ln(np)}{-\ln(1-p)}\to \infty$,
and recalling that $p \ge \frac{C}{n}$ for some $C=C(\eps_0)$ sufficiently large,
we have
$$
\EE[Y_r] \le \left(\Theta(1) \frac{C^{-2\eps_0}}{\ln (C)}\right)^r \le \left(\frac{1}{2}\right)^r =o(1).
$$
On the other hand, if $p= \omega\left(n^{-1}\right)$, then we have
$$
\EE[Y_r] \le \left(o(1)\right)^r = o(1).
$$
In both cases an application of Markov's inequality shows that whp $Y_r=0$, as required.

To prove the lower bound, as mentioned previously we assume that $p<\frac{1}{(\ln n)^3}$.

Recalling that for $p=o(1)$ we have
$$
\log_q(np) = \frac{\ln (np)}{\ln \left(\frac{1}{1-p}\right)} = (1+o(1))\frac{\ln(np)}{p},
$$
the two lemmas together imply that
\begin{align*}
\pr\left(\M(G)\le (1-\varepsilon_0) \log_q(np)\right)
& \le \pr\left(\M(G)\le \left(1-\frac{2\varepsilon_0}{3}\right)\frac{\ln c}{c}n\right)\\
&= \pr\Big(\M(G)\leq k- \varepsilon\frac{\ln c}{c}n\Big)\\
 & \stackrel{\mbox{{\scriptsize L.\ref{probability1}}}}{\le}
 \frac{\exp\left(-\frac{2n}{c}\right)}{\pr\left(\M(G)\geq k\right)}
 = \frac{\exp\left(-\frac{2n}{c}\right)}{\pr\left (Y_k>0\right)}
 \stackrel{\mbox{{\scriptsize L.\ref{probability2}}}}{\le} \exp\left(-\frac{n}{c}\right),
\end{align*}
which tends to zero as required.
\end{proof}

\section{Concentration using Talagrand's inequality: proof of  \Cref{probability1}}\label{sec:azuma}
Talagrand's inequality is a useful tool to show that under certain conditions a random variable is tightly concentrated. We will use it in the form which appears in \cite{alon2004probabilistic}.

\begin{defn}
Let $\Omega=\prod_{i=1}^{n}\Omega_i$ be a product of probability spaces such that $\Omega$ has the product measure.
Let $g: \Omega\to \mathbb{R}$ and $f:\mathbb{N}\to \mathbb{N}$ be functions.
\begin{itemize}
    \item We say that $g$ is \emph{Lipschitz} if $|g(x)-g(y)|\leq 1$ for every $x,y\in \Omega$ which differ in at most one coordinate.

\item We say that $g$ is \emph{$f$-certifiable} if for any $x\in \Omega$ and $m\in \mathbb{N}$ such that $g(x)\ge m$,
there exists a set of coordinates $I\subset [n]$ with $|I|\leq f(m)$ such that each $y \in \Omega$ which agrees with $x$ on $I$ also satisfies $g(y)\geq m$.
\end{itemize}
\end{defn}

\begin{thm}[Talagrand]\label{Talagrand}
Let $X$ be a Lipschitz random variable on $\Omega$ which is $f$-certifiable. Then for all $\lambda>0$ and $b\in \mathbb{N}$ it holds that:
\[
    \pr\left(X<b-\lambda \sqrt{f(b)}\right)\cdot \pr\left(X\geq b\right) \leq \exp\left(-\frac{\lambda^2}{4}\right).
\]
\end{thm}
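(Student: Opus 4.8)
The plan is to derive this from Talagrand's convex-distance inequality in the form found in \cite{alon2004probabilistic}. For a measurable set $\mathcal{E}\subseteq\Omega$ and a point $x\in\Omega$, write the convex distance as
\[
\rho(\mathcal{E},x)\ :=\ \sup_{\alpha\in\mathbb{R}^n,\ \|\alpha\|_2\le 1}\ \ \inf_{y\in\mathcal{E}}\ \sum_{i\colon x_i\ne y_i}|\alpha_i| ,
\]
so that the master inequality reads $\pr(\mathcal{E})\cdot\pr\big(\rho(\mathcal{E},\cdot)\ge t\big)\le\exp(-t^2/4)$ for every $t\ge 0$. To prove the stated bound it then suffices to pick the right set $\mathcal{E}$ and to show that one of our two tail events is contained in a set of the form $\{x\colon\rho(\mathcal{E},x)\ge\lambda\}$.

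The crucial choice --- and the only place $f$-certifiability is used --- is to apply the master inequality to the \emph{lower} tail $B:=\{X<b-\lambda\sqrt{f(b)}\}$, and to prove that the \emph{upper} tail $A:=\{X\ge b\}$ satisfies $A\subseteq\{x\colon\rho(B,x)\ge\lambda\}$. Fix $x\in A$. Since $X(x)\ge b$ and $b\in\mathbb{N}$, $f$-certifiability provides a set $I_x\subseteq[n]$ with $|I_x|\le f(b)$ such that every $z$ agreeing with $x$ on $I_x$ has $X(z)\ge b$; note that $I_x$ depends only on $x$. Now take any $y\in B$ and let $z$ be the point equal to $x$ on $I_x$ and equal to $y$ off $I_x$. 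Then $X(z)\ge b$, while $z$ and $y$ differ only on $E:=\{i\in I_x\colon x_i\ne y_i\}$, so applying the Lipschitz property once per coordinate of $E$ gives $X(y)\ge X(z)-|E|\ge b-|E|$; combined with $X(y)<b-\lambda\sqrt{f(b)}$ this forces $|E|>\lambda\sqrt{f(b)}$. Hence the vector $\alpha:=f(b)^{-1/2}\mathbf{1}_{I_x}$ has $\|\alpha\|_2\le 1$ and satisfies $\sum_{i\colon x_i\ne y_i}|\alpha_i|=|E|/\sqrt{f(b)}>\lambda$ for every $y\in B$, so by definition $\rho(B,x)\ge\lambda$, as required.

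It remains to combine these observations. Applying the master inequality to $\mathcal{E}=B$ with $t=\lambda$ and using $A\subseteq\{x\colon\rho(B,x)\ge\lambda\}$ gives
\[
\pr\big(X<b-\lambda\sqrt{f(b)}\big)\cdot\pr\big(X\ge b\big)=\pr(B)\,\pr(A)\ \le\ \pr(B)\cdot\pr\big(\rho(B,\cdot)\ge\lambda\big)\ \le\ \exp\!\Big(-\tfrac{\lambda^2}{4}\Big),
\]
which is exactly the claim. The only genuinely delicate point, in my view, is fixing the direction: $f$-certifiability certifies a \emph{lower} bound on $X$ from a \emph{small} coordinate set, so that small certificate must be anchored at the high point $x\in A$ --- this is why Talagrand has to be applied to $B$ and the containment runs $A\subseteq\{\rho(B,\cdot)\ge\lambda\}$ rather than the other way around. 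The remaining items are routine: degenerate cases ($A=\varnothing$ or $B=\varnothing$ make the product $0$), the fact that we only ever invoke certifiability at the integer threshold $m=b$ so no integrality of $X$ is needed, and a one-line check that $\rho(\mathcal{E},x)=0$ for $x\in\mathcal{E}$ so that the master inequality is applied consistently.
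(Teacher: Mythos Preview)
The paper does not give its own proof of this theorem: it is quoted verbatim as a tool, with a citation to \cite{alon2004probabilistic}, and is then simply applied in the proof of \Cref{probability1}. Your derivation from Talagrand's convex-distance inequality is correct and is precisely the standard argument (as in Alon--Spencer): anchor the certificate $I_x$ at a point $x$ with $X(x)\ge b$, use the weight vector $\alpha=f(b)^{-1/2}\mathbf{1}_{I_x}$, and combine certifiability with the Lipschitz property to force every $y$ in the lower tail to differ from $x$ in more than $\lambda\sqrt{f(b)}$ coordinates of $I_x$. There is nothing to compare against in the paper itself.
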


In order to prove \Cref{probability1} we will 
regard $G(n,p)$ as a product of $n-1$ probability spaces $Z_i$, $i\in [n-1]$. 
Recall that $G(n,p)$ is a graph on vertex set $[n]$.
Each $Z_i$ picks uniformly at random a subset of $[i]$ of size $\mathrm{Bi}(i,p)$ -- these are the neighbours of vertex $i+1$ within $[i]$. It is easy to see that this is equivalent to $G(n,p)$. 

\begin{proof}[ of \Cref{probability1}]
Let $G\sim G(n,p)$. Then the random variable $\M=\M(G)$ is Lipschitz. Indeed, note that by changing a particular $Z_i$ one can change $\M$ only by at most $1$,
since if $G'$ is obtained from $G$ by changing some edges adjacent to vertex $i+1$
and if $M$ is a largest matching in $G$, then certainly $M'$, which is obtained
from $M$ by deleting $i+1$ and its partner if it lies in $M$, is a matching in $G'$,
and therefore $\M(G')\ge \M(G)-1$. By symmetry also $\M(G')\le \M(G)+1$.

Furthermore, if $\M\geq m$ then there exists a set $S\subset V(G)$ of $2m$ vertices which induces a matching of size $m$. By fixing $Z_{i-1}$ for each $i\in S$ (where we interpret $Z_0$ as an empty random variable), changing other coordinates can only increase the largest induced matching in $G$. Therefore $\M$ is $f$-certifiable with $f(m)=2m$. This means that we can apply \Cref{Talagrand} with parameters $b=k$ and $\lambda =\varepsilon\frac{\ln c}{c\sqrt{2k}}n$, and observing that $\lambda \sqrt{f(b)} = \varepsilon \frac{\ln c}{c}n$ we obtain:
\begin{align*}
     \pr\left(\M<k-\varepsilon \frac{\ln c}{c}n\right)\cdot P\left(\M\geq k\right)
     \leq \exp \left(-\frac{\lambda^2}{4}\right)
     & = \exp\left(-\frac{\varepsilon^2(\ln c)^2n^2}{8c^2k}\right)\\
     & \stackrel{\eqref{size}}{=}\exp\left(-\frac{\varepsilon^2}{8(1-\varepsilon)}\frac{\ln c}{c}n\right)\leq 
        \exp \left (-\frac{2n}{c}\right)
\end{align*}
which completes the proof.
\end{proof}

\section{Second moment method: proof of  \Cref{probability2}}\label{sec:secondmoment}

\par Consider the family $\{M_i\mid i\in I\}$ of all sets of $k$ unordered disjoint pairs of vertices in $G$,
i.e.\ the family of possible matchings of size $k$. For $i\in I$ let $X_i$
be the indicator random variable which indicates that the pairs in $M_i$ form an induced matching in $G$.
In particular, it holds that $Y_k=\sum_{i\in I}X_i$.
The main difficulty is to prove the following.
\begin{lem}\label{lem:secondmoment}
\begin{equation}\label{ToProve}
    \frac{\sum_{i\in I}\EE[X_i|X_1=1]}{\EE[Y_k]}\leq \exp\left(\frac{n}{c}\right).
\end{equation}
\end{lem}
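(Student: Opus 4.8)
The plan is to fix the matching $M_1$ (the quantity is unchanged under relabelling the vertices) and to group the matchings $M_i$ according to how they meet $M_1$. The first observation is that if $\EE[X_i\mid X_1=1]>0$ then, conditioned on $X_1=1$, every edge of $M_i$ that lies inside $V(M_1)$ must be an edge of $M_1$, and no edge of $M_1$ may be a chord of $V(M_i)$. A short case analysis shows this forces the following structure: some $s$ edges of $M_1$ have both endpoints in $V(M_i)$ (the common edges of $M_i$ and $M_1$), some $a$ edges of $M_1$ have exactly one endpoint in $V(M_i)$ and that endpoint is joined by $M_i$ to a vertex outside $V(M_1)$ (a ``crossing'' edge of $M_i$), while the remaining $k-s-a$ edges of $M_i$ are disjoint from $V(M_1)$. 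In particular $s+a\le k$ and $t:=|V(M_i)\cap V(M_1)|=2s+a\le 2k$. Since conditioning on $X_1=1$ only fixes the status of the $\binom{2k}{2}$ pairs inside $V(M_1)$, of which $\binom{t}{2}$ lie inside $V(M_i)$, for such an $M_i$ we get
\[
\EE[X_i\mid X_1=1]\ \le\ p^{\,k-s}(1-p)^{\binom{2k}{2}-k-\binom{t}{2}+s},
\]
while the number of matchings $M_i$ of type $(s,a)$ is at most $\binom{k}{s}\binom{k-s}{a}2^{a}\cdot\frac{(n-2k)!}{(n-4k+2s+a)!\,(k-s-a)!\,2^{k-s-a}}$.

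Next I would substitute $\EE[Y_k]=\binom{n}{2k}\frac{(2k)!}{k!2^{k}}\,p^{k}(1-p)^{\binom{2k}{2}-k}$ from \Cref{claim:firstmoment}, cancel the common powers of $p$ and $1-p$, bound the binomials crudely ($\binom{k}{s}\le k^{s}/s!$, and so on), and use $\frac{((n-2k)!)^{2}}{(n-4k+2s+a)!\,n!}\le n^{-(2s+a)}e^{O(k^{2}/n)}$ (valid because $2k/n=2(1-\eps)\frac{\ln c}{c}$ is small). Writing $\Sigma_{s,a}$ for the total contribution of the type-$(s,a)$ matchings to $\sum_{i}\EE[X_i\mid X_1=1]$, this gives, with $\alpha:=2k^{2}/(cn)$, $\beta:=4k^{2}/n$ and $q=1/(1-p)$,
\[
\frac{\Sigma_{s,a}}{\EE[Y_k]}\ \le\ \frac{\alpha^{s}\beta^{a}}{s!\,a!}\,q^{\binom{2s+a}{2}}\,e^{O(k^{2}/n)}.
\]
For $(s,a)=(0,0)$ the matching $M_i$ is vertex-disjoint from $M_1$, the conditional probability equals the unconditional one, and there are at most $N:=\binom{n}{2k}\frac{(2k)!}{k!2^{k}}$ of them, so $\Sigma_{0,0}\le\EE[Y_k]$ and this term contributes at most $1$.

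For the remaining terms I set $t=2s+a$ and use $q^{\binom{t}{2}}\le e^{Qt^{2}/2}$ with $Q:=-\ln(1-p)\le(1+o(1))p$. Summing over $s$ at fixed $t$ (with $a=t-2s$) and using $\frac{1}{(t-2s)!}\le\frac{t^{2s}}{t!}$,
\[
\sum_{0\le 2s\le t}\frac{\alpha^{s}\beta^{\,t-2s}}{s!\,(t-2s)!}\ \le\ \frac{\beta^{t}}{t!}\sum_{s\ge0}\frac{(\alpha t^{2}/\beta^{2})^{s}}{s!}\ =\ \frac{\beta^{t}}{t!}\,e^{\alpha t^{2}/\beta^{2}}\ \le\ \frac{\beta^{t}}{t!}\,e^{n/(2c)},
\]
the last inequality because $\alpha t^{2}/\beta^{2}=\frac{n t^{2}}{8ck^{2}}\le\frac{n}{2c}$ for $t\le2k$. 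It then remains to control $\sum_{t=1}^{2k}\frac{\beta^{t}}{t!}e^{Qt^{2}/2}\le 2k\cdot\max_{1\le t\le2k}e^{g(t)}$, where $g(t)=t\ln(e\beta/t)+Qt^{2}/2$. The function $g$ increases, then decreases, then increases; its first local maximum sits near $t\approx\beta$ with value $O(\beta)=o(n/c)$, and $g(2k)=2k\ln(2ek/n)+2Qk^{2}<0$ because $k=(1-\eps)\frac{\ln c}{c}n$ makes the term $2Qk^{2}$ strictly dominated by the negative term $2k\ln(2ek/n)$ — this is exactly where the slack $(1-\eps)<1$ enters. Hence the maximum is $o(n/c)$, so $\sum_{t}\frac{\beta^{t}}{t!}e^{Qt^{2}/2}=e^{o(n/c)}$ and $\sum_{(s,a)\neq(0,0)}\Sigma_{s,a}/\EE[Y_k]\le e^{n/(2c)+o(n/c)}$. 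Since $n/c\ge(\ln n)^{3}\to\infty$, this is at most $\frac{1}{2}e^{n/c}$ for $n$ large, and adding the $(0,0)$ contribution yields $\sum_{i}\EE[X_i\mid X_1=1]/\EE[Y_k]\le e^{n/c}$, as required.

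The step I expect to be the main obstacle is the large-$t$ regime: there $q^{\binom{t}{2}}$ grows while the count of type-$(s,a)$ matchings is exponentially small, and the two effects cancel to first order, so the argument only closes thanks to the $\eps$-slack in the choice of $k$, and the estimates must be kept essentially tight (precise Stirling bounds, precise estimate of $-\ln(1-p)$). This is also why I would not split $q^{\binom{2s+a}{2}}$ into separate factors in $s$ and $a$ by a crude AM--GM bound — that loses exactly the constants one needs — but instead carry out the $s$-summation exactly at fixed $t$ as above, exploiting the constraint $s+a\le k$ (equivalently $t\le 2k$).
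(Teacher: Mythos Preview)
Your proposal is correct and follows the same overall strategy as the paper: classify compatible matchings $M_i$ by the pair $(\text{common edges},\text{crossing edges})$, compute the corresponding conditional probability, divide by $\EE[Y_k]$ from \Cref{claim:firstmoment}, and show each resulting term is at most $\exp(o(n/c))$ except for a single factor $\exp(n/(2c))$, so that summing over $O(k^2)\le n^2$ terms stays below $\exp(n/c)$.

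The organizational difference is that the paper keeps the two parameters separate throughout (its $(\ell,s)$ is your $(s,a)$), applies Stirling to the exact $b_{\ell,s}$, and then runs a two--dimensional case analysis: Case~I for $2\ell+s=\omega_c(k/\ln c)$, Case~II for $2\ell+s=O_c(k/\ln c)$ with further subcases, splitting $F=F_1+F_2$. You instead collapse to the single variable $t=2s+a$ by summing over $s$ at fixed $t$, which extracts the uniform factor $e^{\alpha t^2/\beta^2}\le e^{n/(2c)}$ and reduces everything to the one--variable function $g(t)=t\ln(e\beta/t)+Qt^2/2$. This is a genuine simplification: it replaces the paper's $F_1/F_2$ subcase bookkeeping by a single concavity/convexity argument. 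Both approaches hinge on exactly the same phenomenon at $t=2k$, namely that $2Qk^2\sim 2(1-\eps)^2\frac{(\ln c)^2 n}{c}$ is beaten by $|2k\ln(2ek/n)|\sim 2(1-\eps)\frac{(\ln c)^2 n}{c}$ thanks to the factor $(1-\eps)$.

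One point to tighten: your description of $g$ as ``increases, then decreases, then increases'' is not always accurate (if $\beta<1$ the initial increase may be absent). The clean way to finish is to note that $g''(t)=-1/t+Q$ changes sign once, at $t^*=1/Q\approx n/c<2k$; on $[1,t^*]$ the concave maximum of $g$ is at most $e\beta=O(k^2/n)=o_c(n/c)$ (since $t\ln(e\beta/t)\le\beta$ and $Qt^2/2\le 1/(2Q)\cdot Q^2 (t^*)^2/2$ \ldots\ or more simply $g(t_1)\le t_1\le e\beta$ at the critical point), while on $[t^*,2k]$ convexity forces the max at an endpoint, and you have already checked $g(2k)<0$. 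This makes ``$\max_t g(t)=o_c(n/c)$'' fully rigorous without the shape heuristic.
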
{}

\begin{proof}[ of \Cref{probability2}]

We will use the well-known inequality
\begin{equation}\label{eq:pz}
\pr(Y_k>0) \ge \frac{\EE[Y_k]^2}{\EE[Y_k^2]},
\end{equation}
which can be deduced, for example, as a special case of the Paley-Zygmund inequality.
We now observe that
\begin{align*}
    \EE[Y_k^2]&= \sum_{i,j\in I}\EE[X_iX_j]= \sum_{j\in I}\sum_{i\in I}\EE[X_i|X_j=1]\EE[X_j]\\
    &= \sum_{j\in I}\sum_{i\in I}\EE[X_i|X_1=1]\EE[X_1]\\
    &=|I|\cdot \EE[X_1]\sum_{i\in I}\EE[X_i|X_1=1] 
    = \EE[Y_k]\sum_{i\in I}\EE[X_i|X_1=1].
\end{align*}
Therefore, using  \Cref{lem:secondmoment}, we obtain
\begin{align*}
    \pr(Y_k>0) \stackrel{\eqref{eq:pz}}{\ge} \left(\frac{\EE[Y_k^2]}{\EE[Y_k]^2}\right)^{-1}
    & = \left(\frac{\sum_{i\in I}E[X_i|X_1=1]}{\EE[Y_k]}\right)^{-1}
     \stackrel{\eqref{ToProve}}{\ge} \exp\left(-\frac{n}{c}\right)
\end{align*}
as required.
\end{proof}

To prove \Cref{lem:secondmoment}, we will consider the numerator and denominator
separately in the next subsection.

\subsection{Conditional expectation}
In this subsection we will give an explicit expression for
the numerator in the left-hand side of~\eqref{ToProve};
for the denominator, we will just substitute the expression
from \Cref{claim:firstmoment}.
We begin with the following lemma,
whose proof forms the main part of this subsection.

\begin{lem}\label{lem:explicitsum}
Given $\ell,s\in \mathbb{N}_0$ with $\ell+s \le k$, let us define
$a_{\ell,s} =a_{\ell,s}(n,p,k,c,\eps)$ by
$$
a_{\ell,s} :=
2^{\ell+2s-k}\frac{k!}{\ell!s!((k-\ell-s)!)^2}\cdot\frac{(n-2k)!}{(n-4k+2\ell+s)!}
p^k \left(\frac{k}{(1-\eps)\ln c}\right)^{\ell}
    (1-p)^{\binom{2k}{2}-k-\left(\binom{2\ell+s}{2}-\ell\right)}.
    $$
Then
\begin{align*}
\sum_{i\in I}\EE[X_i|X_1=1] = \sum_{\ell=0}^k \sum_{s=0}^{k-\ell} a_{\ell,s}.
\end{align*}
\end{lem}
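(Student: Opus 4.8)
The plan is to compute $\sum_{i\in I}\EE[X_i\mid X_1=1]$ by fixing the matching $M_1$ on a vertex set $V_1$ of size $2k$, and classifying every other matching $M_i$ according to how it interacts with $M_1$. The key observation is that $\EE[X_i\mid X_1=1]$ depends only on the ``overlap pattern'' of $M_i$ with $M_1$: specifically, once we condition on $X_1=1$, the only pairs whose status is still random are those not already determined by $M_1$, and the probability that $M_i$ induces a matching is $p^{(\text{number of edges of }M_i\text{ not in }M_1)}\cdot(1-p)^{(\text{number of non-edges of }V(M_i)\text{ not already forced to be non-edges by }M_1)}$.

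The natural parameters are as follows. Let $\ell$ be the number of edges that $M_i$ and $M_1$ have in common, and let $s$ be the number of vertices of $V(M_i)$ that lie in $V_1$ but are not covered by one of these $\ell$ shared edges (so these $s$ vertices are each matched by $M_i$ to a vertex outside $V_1$, or to each other). Then $V(M_i)\cap V_1$ has size $2\ell+s$, and $V(M_i)$ uses $2k-(2\ell+s)$ vertices outside $V_1$; the total vertex set $V(M_1)\cup V(M_i)$ has size $4k-2\ell-s$, which explains the falling factorial $\frac{(n-2k)!}{(n-4k+2\ell+s)!}$ counting the ways to choose the vertices of $V(M_i)$ outside $V_1$ in order. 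The number of ways to build such an $M_i$ is: choose which $\ell$ of the $k$ edges of $M_1$ are shared and which $s$ of the remaining $2(k-\ell)$ vertices of $V_1$ are the ``stray'' ones, i.e. a multinomial factor $\frac{k!}{\ell!\,s!\,((k-\ell-s)!)^2}$ for partitioning the $k$ edges of $M_1$ into shared / half-used / unused (with the unused ones contributing two vertices each to the $s$-count), times the ways to order and pair up the outside vertices — this is where the $2^{\ell+2s-k}$ normalization comes from after combining with the $(2r-1)!!$-type count in $\EE[X_1]$. Finally, the conditioning on $X_1=1$ fixes $p^\ell$ of the edge-probabilities for free, so one has $p^{k-\ell}$ from the fresh edges; combined with a factor $p^\ell$ this is reorganized as $p^k\cdot\big(k/((1-\eps)\ln c)\big)^\ell$ using $k=\frac{(1-\eps)\ln c}{c}n$ and $p=c/n$, which makes the dependence on $\ell$ explicit. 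The $(1-p)$-exponent is $\binom{2k}{2}-k$ (all non-edges that $M_i$ would require) minus $\binom{2\ell+s}{2}-\ell$ (those already forced by $X_1=1$, since $V(M_i)\cap V_1$ has $2\ell+s$ vertices among which $\ell$ are edges of $M_1$ and the rest are forced non-edges).

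The steps I would carry out, in order: (1) partition $I$ as $I=\bigsqcup_{\ell,s} I_{\ell,s}$ according to the pair $(\ell,s)$ defined above, noting $\ell+s\le k$ and $2\ell+s\le 2k$; (2) show that $\EE[X_i\mid X_1=1]$ is constant on each $I_{\ell,s}$, equal to $p^{k-\ell}(1-p)^{\binom{2k}{2}-k-(\binom{2\ell+s}{2}-\ell)}$, by the conditioning argument above; (3) count $|I_{\ell,s}|$ by the multinomial-and-ordering argument; (4) multiply, simplify the powers of $p$ using $k=\frac{(1-\eps)\ln c}{c}n$ to produce the stated $a_{\ell,s}$, and check the combinatorial constants match (this is the bookkeeping that produces $2^{\ell+2s-k}$). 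The main obstacle is step (3) together with the algebraic reconciliation in step (4): getting the symmetry factors exactly right (each edge of $M_1$ contributes $0$, $1$, or $2$ vertices to $V_1\cap V(M_i)$; the double-factorial count for pairing up the outside vertices; and the interplay with $\EE[X_1]$ in the normalization $\sum_i\EE[X_i\mid X_1=1]$ versus a raw count) is fiddly, and it is easy to be off by powers of $2$ or by a factorial. I would double-check it by verifying the $\ell=s=0$ term reduces to $\EE[Y_k]\cdot\binom{n-2k}{2k}\frac{(2k)!}{k!2^k}\big/\binom{n}{2k}\frac{(2k)!}{k!2^k}$-type consistency, and by checking that summing a crude bound recovers something of the right order; everything else is routine.
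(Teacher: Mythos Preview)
Your overall strategy matches the paper's: partition $I$ according to the overlap pattern $(\ell,s)$ with $M_1$, show the conditional expectation is constant on each class, count each class, and multiply. The gap is that you have not isolated the \emph{compatibility} constraints that force $\EE[X_i\mid X_1=1]=0$, and your description of $I_{\ell,s}$ explicitly includes configurations that violate them. There are two such constraints. First, if $M_i$ contains a pair $\{u,v\}$ with $u,v\in V_1$ lying in different pairs of $M_1$ (your ``or to each other'' case), then under $X_1=1$ this pair is a non-edge, so $X_i=0$. Second, if some pair $\{u,v\}\in M_1$ has $u,v$ lying in different pairs of $M_i$ (i.e.\ both endpoints of an $M_1$-edge are ``stray''), then under $X_1=1$ the edge $uv$ is present between two non-partners in $V(M_i)$, so again $X_i=0$. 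Your remark that ``each edge of $M_1$ contributes $0$, $1$, or $2$ vertices to $V_1\cap V(M_i)$'' together with ``the unused ones contributing two vertices each to the $s$-count'' allows this second case. With either case included, step~(2) is false as stated: the conditional expectation is not constant on your $I_{\ell,s}$.

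Once both constraints are imposed, every stray vertex in $V_1$ is matched by $M_i$ to a vertex \emph{outside} $V_1$, and the $s$ strays come from $s$ \emph{distinct} edges of $M_1$ (one endpoint each). The correct count is then
\[
\binom{k}{\ell}\cdot\binom{k-\ell}{s}2^s\cdot\binom{n-2k}{s}s!\cdot\binom{n-2k-s}{2(k-\ell-s)}\,(2(k-\ell-s)-1)!!,
\]
which simplifies to the stated $|I(\ell,s)|$. Note that the second factor $(k-\ell-s)!$ in the denominator of $a_{\ell,s}$ does not arise from a multinomial on the edges of $M_1$ as you suggest, but from the double factorial $(2(k-\ell-s)-1)!!=\frac{(2(k-\ell-s))!}{(k-\ell-s)!\,2^{k-\ell-s}}$ for pairing the outside vertices; your multinomial explanation as written would give only $\frac{k!}{\ell!\,s!\,(k-\ell-s)!}$ with a single factorial.
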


We first determine which $i\in I$ give a non-zero contribution
to $\sum_{i\in I}\EE[X_i|X_1=1]$.

\begin{defn}\label{compatible}
We say that $M_i$ is \emph{compatible} with $M_1$ if
\begin{enumerate}
    \item $M_i$ contains no pair $\{u,v\}$ whose vertices lie in different pairs of $M_1$;
    \item $M_1$ contains no pair $\{u,v\}$ whose vertices lie in different pairs of $M_i$;
\end{enumerate}
\end{defn}

As a consequence of this definition, we observe that we can classify the pairs of $M_i$
into types according to their intersection with $M_1$. More precisely,
for any $i\in I$, denote by $V(M_i)$ the set of vertices contained in some pair of $M_i$.
Then we have the following.

\begin{nremark}\label{rem:compatible}
If $M_i$ is compatible with $M_1$, then $M_i$ only contains the following types of pairs (see \Cref{fig:TypesOfPairs}):
\begin{enumerate}[(A)]
    \item \label{pairs:2} Pairs from $M_1$;
    \item \label{pairs:1} Pairs which contain one vertex from $V(M_1)$ and one vertex outside $V(M_1)$;
    \item \label{pairs:0} Pairs with no vertex in $V(M_1)$.
\end{enumerate}
Furthermore the only pairs of $M_1$ whose endpoints both lie in $V(M_i)$
are also pairs of $M_i$ (or equivalently, conditions (A), (B) and (C)
also hold with $M_1$ and $M_i$ switched).
\end{nremark}

\begin{figure}
    \centering
    \begin{tikzpicture}
     \node[scale=2.2] (c) at  (-5,0)  {$M_i$};
       \node[scale=2.2] (c) at  (9.5 ,0)  {$M_1$}; 
    \node[scale=1.5] (A) at (2.25,-2.2) {(A)};    
    \node[scale=1.5] (B) at (1,2.4) {(B)};    
    \node[scale=1.5] (C) at (-2,-2.2) {(C)};    
    
         \draw[fill=lightgray, opacity=0.4]
         (0,0) ellipse (4 and 2);
         \draw[fill=lightgray, opacity=0.4]
         (4.5,0) ellipse (4 and 2);
         
          \tikzstyle{every node}=[circle, draw, fill=black!50,
                        inner sep=0pt, minimum width=3pt]
        \foreach \i in {1,2,3,4,5,6}
         {
            \node[] (v) at  (-3+\i*0.3,-0.7){};
            \node[] (u) at   (-3+\i*0.3,+0.7){};
            \draw[color=black, dotted, line width = 1pt] (u) to (v);
        }
        
         \foreach \i in {0,1,2}
         {
         
            \node[] (v) at (0.1,0.3+0.4*\i){};
            \node[] (u) at (2.25, 0.3+0.4*\i){};
            \draw[color=black,dashed, line width=1pt] (u)--(v);
            \node[] (v) at (4.4,0.3+0.4*\i){};
            \node[] (u) at (2.25, 0.3+0.4*\i){};
            \draw[color=black, line width=3pt] (u)--(v);
         }
        
         \foreach \i in {2,3,4,5,6}
         {
            \node[] (v) at  (1+\i*0.3,-1.3){};
            \node[] (u) at   (1+\i*0.3,0){};
            \draw[color=black, line width = 1pt] (u) to (v);
        }
        
        \foreach \i in {1,2,3,4,5,6}
         {
            \node[] (v) at  (7.5-\i*0.3,-0.7){};
            \node[] (u) at   (7.5-\i*0.3,+0.7){};
            \draw[color=black, line width = 3pt] (u) to (v);
        }
        
    \end{tikzpicture}
    \caption{Type (A),(B) and (C) pairs illustrated
    as plain, dashed and dotted lines,
    respectively. Pairs which are in $M_1$ but not in $M_i$ are
    thick.}
    \label{fig:TypesOfPairs}
\end{figure}
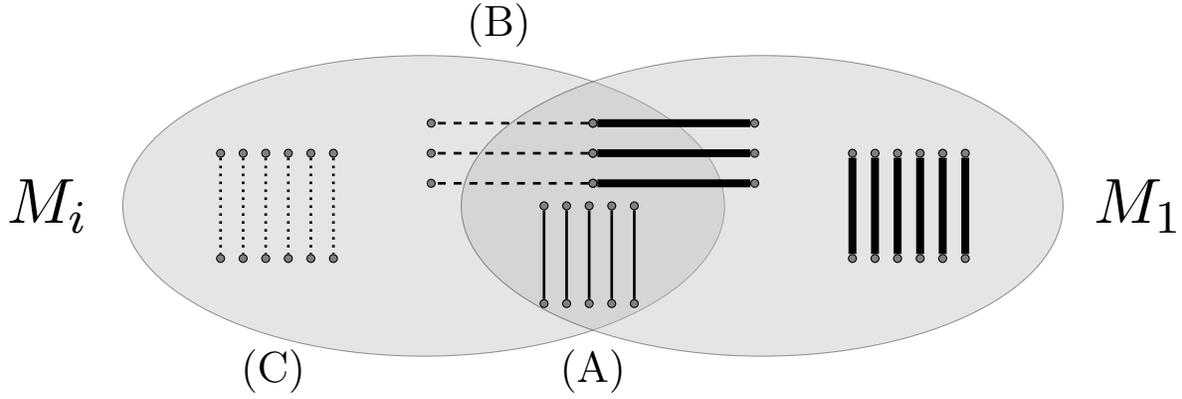

\begin{claim}
If $M_i$ is not compatible with $M_1$, then $\EE[X_i|X_1=1]= 0$.
\end{claim}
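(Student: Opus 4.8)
The plan is to show that if $M_i$ violates either condition in \Cref{compatible}, then the event $X_i = 1$ is incompatible with the event $X_1 = 1$, so the conditional expectation (which equals $\pr(X_i = 1 \mid X_1 = 1)$ since $X_i$ is an indicator) vanishes. The key observation is that the two conditions are symmetric in $M_1$ and $M_i$, so by symmetry it suffices to derive a contradiction from the failure of condition~(1); the failure of condition~(2) is handled identically with the roles of $M_1$ and $M_i$ exchanged.

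First I would suppose that condition~(1) fails, so there is a pair $\{u,v\} \in M_i$ with $u$ and $v$ lying in \emph{different} pairs of $M_1$; say $u$ is matched to $u'$ in $M_1$ and $v$ is matched to $v'$ in $M_1$, with $\{u,u'\} \neq \{v,v'\}$. Now condition on $X_1 = 1$, i.e.\ on $M_1$ forming an induced matching in $G$. Since $u$ and $v$ belong to distinct edges of this induced matching, the pair $\{u,v\}$ is one of the ``forbidden'' non-edges: $X_1 = 1$ forces $\{u,v\} \notin E(G)$. But $\{u,v\} \in M_i$, so for $X_i = 1$ we would need $\{u,v\} \in E(G)$ (every pair of $M_i$ must be an edge for $M_i$ to be an induced matching). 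These two requirements contradict each other, hence $\pr(X_i = 1 \mid X_1 = 1) = 0$, which is exactly $\EE[X_i \mid X_1 = 1] = 0$.

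For the failure of condition~(2), I would invoke the symmetry already noted: condition~(2) for the ordered pair $(M_1, M_i)$ is condition~(1) for the ordered pair $(M_i, M_1)$. If $M_1$ contains a pair $\{u,v\}$ with $u,v$ in different pairs of $M_i$, then conditioning on $X_i = 1$ forces $\{u,v\} \notin E(G)$, while $X_1 = 1$ forces $\{u,v\} \in E(G)$; again these cannot both hold. Thus in either case $\EE[X_i \mid X_1 = 1] = \pr(X_i = 1, X_1 = 1)/\pr(X_1 = 1) = 0$, completing the argument.

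I do not expect any real obstacle here — this is a short logical consistency check rather than a calculation. The only mild subtlety worth stating explicitly is that one should be careful that the offending pair $\{u,v\}$ is genuinely a \emph{non-edge} constraint and not a free pair: in the failure of condition~(1) this holds precisely because $u,u'$ and $v,v'$ are distinct edges of $M_1$, so $\{u,v\}$ is not itself one of the $k$ pairs of $M_1$, hence it falls among the $\binom{2k}{2}-k$ pairs of $V(M_1)$ that $X_1=1$ requires to be absent from $G$.
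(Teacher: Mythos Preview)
Your proof is correct and follows essentially the same approach as the paper: argue that the failure of condition~(1) forces $\{u,v\}$ to be a non-edge under $X_1=1$ yet an edge under $X_i=1$, and then invoke symmetry for condition~(2). Your added remark that $\{u,v\}$ genuinely lies among the $\binom{2k}{2}-k$ forbidden pairs is a helpful explicit check that the paper leaves implicit.
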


\begin{proof}
If $M_i$ violates the first condition of compatibility with $M_1$, i.e.\ if
$M_i$ contains a pair of vertices $\{u,v\}$ from different pairs in $M_1$,
then under the assumption that $X_1=1$, since $M_1$ is an \emph{induced} matching,
certainly $\{u,v\}$ is not an edge in $G$, hence $X_i=0$. By symmetry, if the second
condition is violated, it is also not possible that $X_i=X_1=1$.
\end{proof}

We may therefore restrict our attention to matchings $M_i$ that are compatible with $M_1$.
Next we define an equivalence relation on $I$
(or more precisely on the subset of those $i\in I$ such that $M_i$ is compatible with $M_1$)
such that
for each $i$ in the same equivalence class, the expression $\EE[X_i|X_1=1]$ is the same.

\begin{defn}
Define $I(\ell,s)$ to be the set of $i\in I$ such that $M_i$ is compatible with $M_1$
and has exactly $\ell$ pairs of vertices which are of type~\eqref{pairs:2}
and $s$ pairs of type~\eqref{pairs:1} (see \Cref{rem:compatible}).
\end{defn}

Thus we have
\begin{equation}\label{eq:partitionedsum}
    \sum_{i\in I}\EE[X_i|X_1=1] = \sum_{\ell,s} \sum_{i\in I(\ell,s)}\EE[X_i|X_1=1].
\end{equation}

Now for fixed $\ell,s$, we can handle the conditional expectation with the following claim.

\begin{claim}\label{claim:conditionalProbability}
For $i\in I(\ell,s)$, we have
\begin{equation}\label{conditionalProbability}
    \EE[X_i|X_1=1]=p^k \left(\frac{k}{(1-\eps)\ln c}\right)^{\ell}
    (1-p)^{\binom{2k}{2}-k-\left(\binom{2\ell+s}{2}-\ell\right)}.
\end{equation}
\end{claim}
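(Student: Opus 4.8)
The plan is to compute $\EE[X_i \mid X_1 = 1]$ directly by bookkeeping which pairs of vertices in $V(M_i)$ must be edges and which must be non-edges, conditioned on the event that all $k$ pairs of $M_1$ are edges and all remaining $\binom{2k}{2}-k$ pairs inside $V(M_1)$ are non-edges. Since the edges of $G(n,p)$ are independent, the conditional probability factorises over pairs, and conditioning on $X_1=1$ only affects those pairs that lie entirely inside $V(M_1)$; all other pairs retain their unconditional probabilities. So the first step is to split the $\binom{2k}{2}$ pairs inside $V(M_i)$ (there are exactly this many since $|V(M_i)|=2k$) into those also lying inside $V(M_1)$ and those not.

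For the event $X_i=1$ we need: the $k$ pairs of $M_i$ to be edges, contributing a factor $p^k$; and the remaining $\binom{2k}{2}-k$ pairs inside $V(M_i)$ to be non-edges. Among those $\binom{2k}{2}-k$ pairs, I would identify how many also lie inside $V(M_1)$: using \Cref{rem:compatible}, $V(M_i)\cap V(M_1)$ consists of the $2\ell$ vertices from the $\ell$ type-(A) pairs together with the $s$ vertices from type-(B) pairs that lie in $V(M_1)$, so $|V(M_i)\cap V(M_1)| = 2\ell+s$ and there are $\binom{2\ell+s}{2}$ pairs inside this intersection. Of those, exactly $\ell$ are edges of $M_i$ (the type-(A) pairs), so $\binom{2\ell+s}{2}-\ell$ of the ``must be non-edge'' pairs of $M_i$ lie inside $V(M_1)$. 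But conditioned on $X_1=1$, every pair inside $V(M_1)$ that is not an edge of $M_1$ is \emph{already known to be a non-edge} — and the type-(A) edges are already known to be edges — so these $\binom{2\ell+s}{2}-\ell$ non-edge constraints (and the $\ell$ edge constraints) are satisfied with conditional probability $1$. The remaining $\binom{2k}{2}-k-\big(\binom{2\ell+s}{2}-\ell\big)$ non-edge constraints involve at least one vertex outside $V(M_1)$, hence are independent of $X_1$ and each contributes a factor $(1-p)$; likewise the edges of $M_i$ outside $V(M_1)$ — but we already accounted for all $k$ edges via the $p^k$ factor, and the $\ell$ type-(A) edges among them are free, so strictly we get $p^{k-\ell}\cdot 1^{\ell}$; since the statement writes $p^k$ I would instead note that the $\ell$ type-(A) pairs contribute conditional probability $1$ rather than $p$, which accounts for the discrepancy between a naive $p^k$ and the true value — this is where the factor $\big(k/((1-\eps)\ln c)\big)^\ell = (1/p)^\ell$ comes from, since $p = c/n$ and $k = (1-\eps)(\ln c)n/c$ give $1/p = n/c = k/((1-\eps)\ln c)$. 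Putting these together yields exactly \eqref{conditionalProbability}.

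Concretely, the cleanest way to present this is to write $\EE[X_i\mid X_1=1]$ as a product over all $\binom{2k}{2}$ pairs inside $V(M_i)$ of the conditional probability that that pair has the required status (edge if it is in $M_i$, non-edge otherwise), and observe each factor is either $1$ (for the $\binom{2\ell+s}{2}$ pairs inside $V(M_1)$), $p$ (for the $k-\ell$ edges of $M_i$ outside $V(M_1)$), or $1-p$ (for the $\binom{2k}{2}-k-\binom{2\ell+s}{2}+\ell$ remaining pairs). This gives $\EE[X_i\mid X_1=1] = p^{k-\ell}(1-p)^{\binom{2k}{2}-k-\binom{2\ell+s}{2}+\ell}$, and then I rewrite $p^{k-\ell} = p^k \cdot p^{-\ell} = p^k\big(k/((1-\eps)\ln c)\big)^\ell$ using \eqref{size} and $p = c/n$.

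I do not expect any serious obstacle here: the only things to be careful about are (i) correctly counting $|V(M_i)\cap V(M_1)| = 2\ell+s$ via \Cref{rem:compatible}, making sure the type-(B) pairs each contribute exactly one vertex to the intersection and type-(C) pairs none; (ii) correctly identifying that \emph{all} $\binom{2\ell+s}{2}$ pairs inside $V(M_1)$ have their status fixed by the conditioning — the $\ell$ edges of $M_i$ among them are edges of $M_1$ (type (A)), and the other $\binom{2\ell+s}{2}-\ell$ are non-edges forced by $X_1=1$; and (iii) the algebraic identity $1/p = k/((1-\eps)\ln c)$, which is immediate from \eqref{size}. The argument is otherwise just independence of edges plus careful enumeration.
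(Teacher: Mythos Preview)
Your argument is correct and follows essentially the same route as the paper's own proof: both count the $k-\ell$ matching edges of $M_i$ not already in $M_1$ (contributing $p^{k-\ell}$), identify the $\binom{2\ell+s}{2}-\ell$ non-edge constraints that lie entirely inside $V(M_1)$ and are therefore already forced by the conditioning, and rewrite $p^{k-\ell}=p^k\big(k/((1-\eps)\ln c)\big)^\ell$ via~\eqref{size}. Your write-up is a bit more explicit about the independence structure and the identity $1/p=k/((1-\eps)\ln c)$, but the content is the same.
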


\begin{proof}
Conditioning on the event that $X_1=1$,
we observe that the $\ell$ pairs of type~\eqref{pairs:2} are already automatically
present as edges because they are pairs of $M_1$, but we require a further
$k-\ell$ pairs to be present as edges, which occurs with probability
$p^{k-\ell} = p^k \left(\frac{k}{(1-\eps)\ln c}\right)^{\ell}$.
Furthermore, we require all of the
$\binom{2k}{2}-k$ pairs which lie within $V(M_i)$ but are not in the
matching $M_i$ to be non-edges of $G(n,p)$; however, those pairs that lie
inside $V(M_1)$, of which there are $\binom{2\ell+s}{2}-\ell$,
are already guaranteed to be non-edges by the conditioning
on $X_1=1$. Thus the probability that all appropriate pairs are
non-edges is $(1-p)^{\binom{2k}{2}-k - \left(\binom{2\ell+s}{2}-\ell\right)}$.
Multiplying the two terms together, we obtain the statement of the claim.
\end{proof}

The sum over $I(\ell,s)$ in~\eqref{eq:partitionedsum} is dealt with using the following result.

\begin{claim}\label{claim:Counting}
Let $\ell,s \in \mathbb{N}_0$.
\begin{enumerate}[(i)]
    \item If $\ell+s >k$, then $|I(\ell,s)|=0$.
    \item If $\ell+s \le k$, then \begin{equation}\label{Counting}
    |I(\ell,s)|=2^{\ell+2s-k}\frac{k!}{\ell!s!((k-\ell-s)!)^2}\cdot\frac{(n-2k)!}{(n-4k+2\ell+s)!}.
\end{equation}
\end{enumerate}
\end{claim}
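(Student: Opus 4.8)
The plan is to count directly the number of matchings $M_i$ of size $k$ that are compatible with $M_1$ and have exactly $\ell$ type-\eqref{pairs:2} pairs and $s$ type-\eqref{pairs:1} pairs; part (i) will then fall out of the fact that each of the $\ell$ type-(A) pairs and each of the $s$ type-(B) pairs uses at least one vertex of $V(M_1)$, so $2\ell+s\le 2k$ is needed, but more sharply, since the $\ell$ type-(A) pairs use $2\ell$ vertices of $V(M_1)$ and the $s$ type-(B) pairs use $s$ further vertices of $V(M_1)$ among the remaining $2k-2\ell$, we must have $\ell+s\le k$. First I would use \Cref{rem:compatible} to argue that specifying $M_i\in I(\ell,s)$ amounts to three independent choices: (1) which $\ell$ pairs of $M_1$ are reused as type-(A) pairs; (2) how to form the $s$ type-(B) pairs, i.e.\ which $s$ of the remaining $2k-2\ell$ vertices of $V(M_1)$ are matched outward, how they pair up with $s$ vertices outside $V(M_1)$, and a subtlety about which vertex of each used $M_1$-pair is the "inner" endpoint; (3) how to form the $k-\ell-s$ type-(C) pairs entirely outside $V(M_1)$.

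For step (1) there are $\binom{k}{\ell}$ ways. For step (3), the $k-\ell-s$ type-(C) pairs are a matching of size $k-\ell-s$ on a set of $n-2k$ vertices outside $V(M_1)$, minus however many of those vertices are already consumed by type-(B) pairs, so I would actually merge the counting of type-(B) outer endpoints and type-(C) pairs: we need to choose an ordered-then-symmetrized configuration on the $n-2k$ outside vertices consisting of $s$ "stubs" (outer endpoints of type-(B) pairs) together with $k-\ell-s$ ordinary pairs. The cleanest way is to count labelled choices and divide by the appropriate symmetry factors, exactly as in \Cref{claim:firstmoment}; concretely, from the $2k-2\ell$ unused vertices of $V(M_1)$ we pick and the $n-2k$ outside vertices we pick which vertices play which role, giving ratios of factorials, and the factor $((k-\ell-s)!)^2$ in the denominator comes from unordering the type-(C) pairs and from the fact that within each type-(C) pair the two endpoints are interchangeable is \emph{not} a further $2^{k-\ell-s}$ — rather the $2^{k-\ell-s}$ is absorbed; I will track the power of $2$ carefully. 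The upshot is that the power $2^{\ell+2s-k}$ records: a factor $2^{s}$ from choosing, for each type-(B) pair, which endpoint inside $V(M_1)$ is used (the two vertices of an $M_1$-pair are distinguishable as objects but symmetric under the $M_1$-pair's own internal swap, which is why it is $2^s$ and not built into a binomial), together with the reconciliation of the $(2\ell+s)$ versus $k$ bookkeeping against the $2$'s hidden in "$(2r-1)!!$"-type double-factorial counts for the outside structure.

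The main obstacle I expect is getting the power of $2$ and the factorial ratio $\tfrac{(n-2k)!}{(n-4k+2\ell+s)!}$ exactly right, because there are several competing sources of factors of two: the internal swap in each reused $M_1$-pair (type A), the choice of inner endpoint in each type-(B) pair, the unordering of type-(C) pairs, and the global unordering of the list of $k$ pairs of $M_i$. The safest route, which I would take, is to count \emph{sequences} — first linearly order all $k$ pairs of $M_i$ with the first $\ell$ of type (A), the next $s$ of type (B), the last $k-\ell-s$ of type (C), and order the two vertices within each pair — obtaining a clean product of falling factorials, and then divide by the exact symmetry group: $\ell!\,s!\,(k-\ell-s)!$ for permuting pairs within each type, and $2$-powers for the vertex-swaps that genuinely preserve $M_i$ (namely $2^{k}$ overall for the $k$ within-pair swaps) against the $2$-powers I artificially introduced. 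Equating the two expressions and simplifying should collapse to exactly \eqref{Counting}, and I would double-check the identity at the boundary cases $\ell=s=0$ (where it must reduce to $\tbinom{n-2k}{2(k)}\cdot\text{(number of perfect matchings)}$ on the outside vertices, matching the $\ell=s=0$ term one would plug into \Cref{lem:explicitsum}) and $\ell=k$, $s=0$ (where $|I(\ell,s)|=1$, giving the identity $2^{0}\cdot\tfrac{k!}{k!\,1\cdot 1}\cdot 1=1$, a good sanity check).
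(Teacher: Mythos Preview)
Your approach is correct and essentially identical to the paper's: choose the $\ell$ type-(A) pairs from $M_1$, then the $s$ type-(B) pairs (inner endpoint in $V(M_1)$, outer endpoint outside, and the bijection between them), then a perfect matching of size $k-\ell-s$ on the remaining outside vertices, and simplify. Two small points to tighten: for part~(i), your vertex-count only gives $2\ell+s\le 2k$; the clean argument is simply that $M_i$ has $k$ pairs in total, of which $\ell$ are type~(A) and $s$ are type~(B). For part~(ii), the place where your exposition wobbles is exactly the point that matters: the second half of \Cref{rem:compatible} forces the $s$ inner endpoints of type-(B) pairs to lie in $s$ \emph{distinct} $M_1$-pairs, so the correct count there is $\binom{k-\ell}{s}2^s$ (choose the pairs, then one vertex from each) rather than $\binom{2k-2\ell}{s}$; with this, the paper's direct product
\[
\binom{k}{\ell}\cdot\binom{k-\ell}{s}2^s\cdot\binom{n-2k}{s}s!\cdot\binom{n-2k-s}{2k-2\ell-2s}\cdot\frac{(2k-2\ell-2s)!}{(k-\ell-s)!\,2^{\,k-\ell-s}}
\]
collapses straight to \eqref{Counting}, and your ordered-sequence/symmetry bookkeeping is unnecessary.
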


\begin{proof}
The first statement is clear, since in order for $i$ to lie in $I(\ell,s)$, the matching $M_i$ must contain $\ell$ pairs of type~\eqref{pairs:2} and $s$ pairs of type~\eqref{pairs:1}, but $k$ pairs in total.

For the second statement,
observe that there are $\binom{k}{\ell}$ ways of choosing
the $\ell$ pairs of type~\eqref{pairs:2}.
We subsequently choose the $s$ endpoints within $M_1$ of pairs of
type~\eqref{pairs:1}, for which there are $\binom{k-\ell}{s}2^s$ possible choices.
For the other endpoints of these $s$ pairs, we have $\binom{n-2k}{s}$ choices
for the vertices outside $V(M_1)$, and $s!$ ways of matching
them with the $s$ endpoints already chosen within $V(M_1)$.
Finally, we have
$\binom{n-2k-s}{2k-2\ell-2s}$ ways of choosing the vertices for the remaining
$k-\ell-s$ pairs of type~\eqref{pairs:0} (while avoiding $V(M_1)$ and the further $s$
vertices chosen for pairs of type~\eqref{pairs:1}),
and $(2k-2\ell-2s-1)!!= \frac{(2k-2\ell-2s)!}{(k-\ell-s)!2^{k-\ell-s}}$
ways of choosing a perfect matching on these vertices. Collecting all these terms gives
\begin{align*}
|I(\ell,s)| & = \binom{k}{\ell}\binom{k-\ell}{s}2^s\binom{n-2k}{s}s!\binom{n-2k-s}{2k-2\ell-2s} \frac{(2k-2\ell-2s)!}{(k-\ell-s)!2^{k-\ell-s}} \\
& = \frac{k!}{\ell!s!(k-\ell-s)!}2^s \frac{(n-2k)!}{(n-4k+2\ell+s)!(k-\ell-s)!2^{k-\ell-s}} \\
& = 2^{\ell+2s-k}\frac{k!}{\ell!s!((k-\ell-s)!)^2}\cdot\frac{(n-2k)!}{(n-4k+2\ell+s)!}
\end{align*}
as claimed.
\end{proof}{}

We can combine the two previous claims to prove \Cref{lem:explicitsum}.

\begin{proof}[ of \Cref{lem:explicitsum}]
Observe that the statement of \Cref{lem:explicitsum} follows directly
by applying~\eqref{eq:partitionedsum} and \Cref{claim:conditionalProbability,claim:Counting}.
\end{proof}

Now let us define $b_{\ell,s} = b_{\ell,s}(n,p,k,c,\eps)$ by
\begin{align}\label{eq:bkls}
    b_{\ell,s} & := a_{\ell,s} \frac{k!2^k}{(2k)!\binom{n}{2k}}p^{-k} (1-p)^{-\binom{2k}{2}+k}\nonumber\\
    & = 2^{\ell+2s}\frac{(k!)^2}{\ell!s!((k-\ell-s)!)^2}\cdot\frac{((n-2k)!)^2}{n!(n-4k+2\ell+s)!}\left(\frac{k}{(1-\eps)\ln c}\right)^{\ell}(1-p)^{-\binom{2\ell+s}{2}+\ell}.
\end{align}
Then we have the following immediate consequence of \Cref{lem:explicitsum} and
\Cref{claim:firstmoment} (applied with $r=k$).

\begin{cor}\label{cor:explicitsum}
$$\frac{\sum_{i\in I}\EE[X_i|X_1=1]}{\EE[Y_k]} = \sum_{\ell=0}^k \sum_{s=0}^{k-\ell} b_{\ell,s}.
$$
\end{cor}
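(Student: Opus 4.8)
The plan is simply to divide the identity furnished by \Cref{lem:explicitsum} through by $\EE[Y_k]$ and to recognise each resulting summand as $b_{\ell,s}$ straight from its definition. There is no new probabilistic content: the work is purely bookkeeping, so the argument is very short.

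Concretely, \Cref{lem:explicitsum} gives $\sum_{i\in I}\EE[X_i|X_1=1] = \sum_{\ell=0}^k\sum_{s=0}^{k-\ell} a_{\ell,s}$, and for the denominator I would apply \Cref{claim:firstmoment} with $r=k$, obtaining $\EE[Y_k] = \binom{n}{2k}\frac{(2k)!}{k!2^k}p^k(1-p)^{\binom{2k}{2}-k}$. Since this is a single positive quantity not depending on $\ell$ or $s$, dividing and pulling the reciprocal inside the double sum yields
\[
\frac{\sum_{i\in I}\EE[X_i|X_1=1]}{\EE[Y_k]} = \sum_{\ell=0}^k\sum_{s=0}^{k-\ell} a_{\ell,s}\cdot\frac{k!2^k}{(2k)!\binom{n}{2k}}\,p^{-k}(1-p)^{-\binom{2k}{2}+k}.
\]
By the defining equation~\eqref{eq:bkls} of $b_{\ell,s}$, the summand on the right-hand side is precisely $b_{\ell,s}$, which is exactly the claimed identity.

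The only point requiring any care --- and hence the ``main obstacle'', such as it is --- is checking the second equality in~\eqref{eq:bkls}: namely that substituting the explicit formula for $a_{\ell,s}$ from \Cref{lem:explicitsum} into $a_{\ell,s}\,\frac{k!2^k}{(2k)!\binom{n}{2k}}\,p^{-k}(1-p)^{-\binom{2k}{2}+k}$ indeed cancels the powers of $p$, turns the power of $2$ from $2^{\ell+2s-k}$ into $2^{\ell+2s}$, collapses the exponent of $(1-p)$ to $-\binom{2\ell+s}{2}+\ell$, and combines $\frac{(n-2k)!}{(n-4k+2\ell+s)!}$ with $1/\binom{n}{2k}$ into $\frac{((n-2k)!)^2}{n!\,(n-4k+2\ell+s)!}$. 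This is a routine cancellation, but it is the one spot where a stray factorial or exponent could slip through, so I would spell it out explicitly; everything else is immediate.
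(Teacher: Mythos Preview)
Your proposal is correct and matches the paper's approach exactly: the corollary is stated there as an immediate consequence of \Cref{lem:explicitsum} and \Cref{claim:firstmoment} (with $r=k$), combined with the definition~\eqref{eq:bkls} of $b_{\ell,s}$. The only extra thing you do is sketch the factorial and exponent cancellation behind the second equality in~\eqref{eq:bkls}, which the paper leaves implicit.
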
{}

\subsection{Analysing the summands}

Given \Cref{cor:explicitsum}, we will aim to bound each of the summands $b_{\ell,s}$,
which is the goal of this subsection.

\begin{lem}\label{lem:boundbkls}
For any $\ell,s \in \mathbb{N}_0$ satisfying $\ell+s\le k$, we have
$$
b_{\ell,s} \le \exp\left(\frac{n}{2c}\right) .
$$
\end{lem}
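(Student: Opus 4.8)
The plan is to bound the summand $b_{\ell,s}$ by extracting its dependence on $\ell$ and $s$ and optimizing over the relevant ranges. First I would rewrite $b_{\ell,s}$ in a more tractable form using standard factorial estimates. The crucial term is $\frac{((n-2k)!)^2}{n!(n-4k+2\ell+s)!}$; since $k = \Theta\!\left(\frac{n\ln c}{c}\right) = o(n)$, all of $n, n-2k, n-4k+2\ell+s$ are $(1-o(1))n$, so one can approximate $\frac{(n-2k)!}{n!} = \Theta(1)\, n^{-2k}(1+o(1))^{k}$ and $\frac{(n-2k)!}{(n-4k+2\ell+s)!} = \Theta(1)\, n^{2k-2\ell-s}(1+o(1))^{k}$, giving a net factor of order $n^{-2\ell-s}(1+o(1))^k$ times lower-order corrections. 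Combined with the $2^{\ell+2s}$ and with $\left(\frac{k}{(1-\eps)\ln c}\right)^\ell = \left(\frac{n}{c}\right)^\ell$ (by the definition \eqref{size} of $k$), the $\ell$-dependence collapses nicely: $n^{-2\ell}\cdot (n/c)^\ell = (nc)^{-\ell}\cdot$(something), which is exponentially small in $\ell$, so the $\ell \ge 1$ terms are harmless. The genuinely delicate dependence is on $s$ through the combinatorial factor $\frac{(k!)^2}{\ell!s!((k-\ell-s)!)^2}$ and the term $(1-p)^{-\binom{2\ell+s}{2}+\ell}$, the latter of which grows like $q^{\Theta(s^2)}$ and must be controlled.

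Concretely, after the simplifications above I would reduce to showing that, up to $(1+o(1))^k$ and polynomial factors, $b_{\ell,s}$ is at most roughly
\[
\Big(\tfrac{c}{n\ln c}\Big)^{\ell}\cdot \binom{k}{\ell,s,k-\ell-s,\ ?}\text{-type factor}\cdot n^{-s}\,2^{s}\, q^{\binom{2\ell+s}{2}-\ell}.
\]
For the $s$-part, I would use $\frac{(k!)^2}{s!((k-\ell-s)!)^2} \le k^{s}\cdot\frac{k!}{(k-\ell-s)!(k-\ell)!}\cdot(\ldots)$ — more carefully, bound $\frac{(k!)}{((k-\ell-s)!)} \le k^{\ell+s}$ and keep one $k!/(k-\ell-s)! \le k^{\ell+s}$ factor, so the whole combinatorial coefficient is at most $\frac{k^{2s}}{s!}$ times the $\ell$-factors. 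Then the $s$-dependent part of $b_{\ell,s}$ (taking $\ell$ fixed, or $\ell = 0$ for the dominant case) is at most of the form $\frac{1}{s!}\left(\frac{Ck^2}{n}\right)^{s} q^{\binom{s}{2}} = \frac{1}{s!}\left(\frac{Ck^2}{n}\right)^s (1-p)^{-s(s-1)/2}$ for a constant $C$. Here $k^2/n = \Theta\!\left(\frac{n(\ln c)^2}{c^2}\right)$ and $(1-p)^{-s/2} = e^{\Theta(ps)} = e^{\Theta(cs/n)}$, so the ratio of consecutive terms is $\frac{k^2}{ns}\cdot q^{s} \approx \frac{n(\ln c)^2}{c^2 s}e^{cs/n}$. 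The plan is to show this whole sum over $s$ is dominated by its behaviour near $s \approx \frac{n}{c}\ln\!\big(\frac{n(\ln c)^2}{c^2}\big)$ or thereabouts, where the maximum term is of size roughly $\exp\!\big(\Theta(\tfrac{n}{c}(\ln\ldots))\big)$; one wants to verify this maximum is at most $\exp(n/(2c))$ after choosing $\eps$ and $c$ appropriately (recall $c \gg \eps^{-1}$ and $c \le n/(\ln n)^3$). Taking logarithms and writing $s = \frac{n}{c}t$, the log of the $s$-term becomes $\frac{n}{c}\big(t\ln\tfrac{k^2c}{n^2 t} + \tfrac{t^2}{2}\cdot\text{(small)} + o(1)\big)$, and the range $c \le n/(\ln n)^3$ ensures the $q^{\binom{s}{2}}$ contribution does not overwhelm the $1/s!$ decay before the sum has effectively terminated.

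The main obstacle I expect is controlling the interaction between the super-exponential decay $1/s!$ (equivalently the $n^{-s}$ and the $(k-\ell-s)!$ in the denominator of the original expression) and the super-exponential growth $q^{\binom{2\ell+s}{2}}$: this is precisely the balance that makes the sparse regime hard and is why the vanilla second-moment method fails around $p \sim n^{-1/2}$. The estimate has to be done with enough precision to land below $\exp(n/(2c))$ rather than merely $\exp(O(n/c))$ with an uncontrolled constant, since \Cref{lem:secondmoment} needs exactly $\exp(n/c)$ after summing the at most $k^2 = e^{o(n/c)}$ terms. So I would be careful to (i) verify that $k = o(n)$ yields $(1+o(1))^k = e^{o(n/c)}$ — this uses $k \ln(1+o(1)) = o(k/\!\ln\text{-type})$; one needs $o(n/c)$, and since $k = \Theta(n\ln c/c)$ this requires the $o(1)$ inside to be $o(1/\!\ln c)$, which holds because the correction terms are $O(k/n) = O(\ln c/c)$; (ii) track the polynomial-in-$n$ prefactors and confirm they too are $e^{o(n/c)}$ (true since $\log n = o(n/c)$ as $c \le n/(\ln n)^3$); and (iii) pin down the maximizing $s$ and show the peak value of $\log b_{\ell,s}$ is $\le \big(\tfrac12 - \Omega_c(1)\big)\tfrac{n}{c}$, using that $c$ is large. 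If the peak lands slightly above $n/(2c)$ one would tighten constants by choosing $C = C(\eps_0)$ larger; the structure of the paper (the constant $C$ is allowed to depend on $\eps_0$) makes this legitimate.
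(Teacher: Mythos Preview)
Your overall strategy---apply Stirling to the factorials in $b_{\ell,s}$ and then optimize over $(\ell,s)$---is exactly the paper's. But there is a genuine gap in your treatment of the $\ell$-variable. The assertion that ``$n^{-2\ell}\cdot(n/c)^\ell=(nc)^{-\ell}$ is exponentially small, so the $\ell\ge 1$ terms are harmless'' ignores two large positive contributions that depend on $\ell$: the combinatorial factor $\tfrac{(k!)^2}{\ell!\,((k-\ell-s)!)^2}$ contributes roughly $k^{2\ell}/\ell!$, and the term $(1-p)^{-\binom{2\ell+s}{2}+\ell}$ contributes $\exp\bigl(\Theta(p\ell^2+p\ell s)\bigr)$, not merely $q^{\Theta(s^2)}$. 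When $\ell=\Theta(k)$ these positive pieces are each of order $\exp\bigl(\Theta(n(\ln c)^2/c)\bigr)$, which is far larger than the target $\exp(n/(2c))$; the compensating negative piece is of the \emph{same} order, so you cannot decouple $\ell$ from $s$ and reduce to the case $\ell=0$ without a real argument. The displayed ``reduced'' expression with $(c/(n\ln c))^\ell$ and $q^{\binom{s}{2}}$ already reflects this bookkeeping error.

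The paper handles the joint dependence by working with $2\ell+s$ as the key quantity. After Stirling one obtains $\ln b_{\ell,s}\le F(\ell,s)+o_c(n/c)$, and the dichotomy is on the size of $2\ell+s$: if $2\ell+s=\omega_c(k/\ln c)$, the negative term $-(2\ell+s)\ln\!\bigl(\tfrac{n-2k}{k}\bigr)\sim -(2\ell+s)\ln c$ dominates every positive term (including $\tfrac{c}{2n}(2\ell+s)^2$, since $2\ell+s\le 2k$ caps this at $(1-\eps)(2\ell+s)\ln c$), giving $F\le 0$. If $2\ell+s=O_c(k/\ln c)$, one splits $F=F_1(\ell)+F_2(s)$ and bounds each by $o_c(n/c)$ via a further subdivision at the scale $k/(c\ln c)$. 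In particular the outcome is $b_{\ell,s}\le\exp\bigl(o_c(n/c)\bigr)$, so your closing worry about landing ``slightly above $n/(2c)$'' does not arise: the constant $\tfrac12$ is not delicate. One further point your sketch omits is that Stirling is unavailable when any of $\ell,s,k-\ell-s$ vanishes; the paper treats these boundary cases separately (\Cref{prop:boundbratios}) by showing each boundary term is within a factor $n^{O(1)}=e^{o(n/c)}$ of an interior term.
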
{}

In the main argument we will use some approximations which are not well-defined
if any one of $s,\ell$ or $k-\ell-s$ is $0$,
so we first deal with such terms by comparing them to others.

\begin{claim}\label{prop:boundbratios}
For any $\ell,s \in [k]_0$ with $\ell+s\le k$, we have
$$
b_{\ell,s} \le n^{9} \max_{\substack{1\le i \le k-2\\ 1\le j \le k-i-1}}b_{i,j}.
$$
\end{claim}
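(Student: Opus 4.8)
The idea is that any $b_{\ell,s}$ in which one of the three ``block counts'' $\ell$, $s$, $k-\ell-s$ is small (equal to $0$, or close to an endpoint) can be compared to some $b_{i,j}$ with all three of $i$, $j$, $k-i-j$ safely in the interior, at the cost of only a polynomial factor in $n$. Concretely, I would fix $\ell,s$ with $\ell+s\le k$, set $t:=k-\ell-s$, and handle the boundary cases by moving $(\ell,s)$ to a nearby interior point $(i,j)$ — e.g.\ replacing a coordinate equal to $0$ by $1$, or a coordinate equal to $k-1$ (so that one of the others is forced to $0$) by $k-2$ — and bound the ratio $b_{\ell,s}/b_{i,j}$ using the explicit formula~\eqref{eq:bkls}.

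The key computation is to estimate the effect on each factor of~\eqref{eq:bkls} when $(\ell,s)$ changes by $O(1)$. First I would note that $(1-p)^{-\binom{2\ell+s}{2}+\ell}\le (1-p)^{-\binom{2k}{2}}$ for all relevant $\ell,s$, and since $p\le \frac{1}{(\ln n)^3}$ and the exponent is $O(k^2)=O(n^2)$, this whole factor, and hence its variation, is bounded; more carefully, changing $(\ell,s)$ by $O(1)$ changes $2\ell+s$ by $O(1)$, hence changes $\binom{2\ell+s}{2}-\ell$ by $O(k)=O(n)$, so this factor changes by a multiplicative $(1-p)^{O(n)} = \exp(O(np)) = \exp(O(1))$ — actually even $\exp(o(1))$ since $p=o(1)$, but a constant is all we need. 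Next, the factor $\bigl(\frac{k}{(1-\eps)\ln c}\bigr)^{\ell}$: since $\frac{k}{(1-\eps)\ln c} = \frac{n}{c}\le n$, changing $\ell$ by $O(1)$ changes this by at most a factor $n^{O(1)}$. Finally the combinatorial factor $\frac{(k!)^2 2^{\ell+2s}}{\ell!\,s!\,((k-\ell-s)!)^2}\cdot\frac{((n-2k)!)^2}{n!\,(n-4k+2\ell+s)!}$: incrementing $\ell$ or $s$ by $1$ multiplies the ratios of factorials by quantities of the form $\frac{k-\ell-s}{\ell+1}$, $\frac{(k-\ell-s)^2}{s+1}$, or $\frac{1}{n-4k+2\ell+s+1}$, etc., each of which lies between $n^{-O(1)}$ and $n^{O(1)}$ (here one uses $n-4k\ge n(1-o(1))\ge 1$ since $k=o(n)$), and similarly $2^{\pm O(1)}=O(1)$. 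Putting these together, moving $(\ell,s)$ to an interior point a bounded distance away costs at most $n^{O(1)}$, and a careful accounting of the exponents gives the stated bound $n^9$ (with room to spare).

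The main obstacle is bookkeeping rather than mathematical depth: one has to enumerate the boundary cases ($\ell=0$; $s=0$; $\ell+s=k$; and combinations such as $\ell=0,s=0$ forcing $k-\ell-s=k$, or $\ell=k-1$, etc.) and check that in each case the chosen interior target $(i,j)$ with $1\le i\le k-2$, $1\le j\le k-i-1$ is within bounded $\ell^1$-distance, so that only finitely many increment/decrement steps are needed and the total polynomial loss stays below $n^9$. I would organise this by proving a single auxiliary estimate $b_{\ell,s}/b_{\ell',s'} \le n^{3(|\ell-\ell'|+|s-s'|)}$ (say) valid whenever all of $\ell,s,k-\ell-s,\ell',s',k-\ell'-s'\ge 0$, and then invoke it with $|\ell-\ell'|+|s-s'|\le 3$ in every boundary case. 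The only subtlety is ensuring the denominators $n-4k+2\ell+s$ never vanish or go negative when we decrease a coordinate; this is fine because $n-4k = n(1-4(1-\eps)\frac{\ln c}{c}) = n(1-o(1))$, which dominates the $O(1)$ perturbations.
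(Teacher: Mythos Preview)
Your approach is essentially the same as the paper's: move from a boundary point to an interior point by $O(1)$ coordinate changes and bound each ratio $b_{\ell,s}/b_{\ell',s'}$ directly from~\eqref{eq:bkls}. The paper implements this via five specific one-step inequalities (e.g.\ $b_{\ell,0}\le n\,b_{\ell,1}$, $b_{0,s}\le n^3 b_{1,s}$, $b_{\ell,k-\ell}\le n^2 b_{\ell,k-\ell-1}$, and two more for the corner near $\ell=k$) and then concatenates at most three of them; your proposed uniform per-step bound is a mild reorganisation of the same idea.

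There is, however, one genuine slip in your estimate of the $(1-p)$ factor. You write that an $O(1)$ change in $(\ell,s)$ alters the exponent by $O(k)$ and hence the factor by $(1-p)^{O(n)}=\exp(O(np))=\exp(O(1))$. But $np=c$, which in this regime ranges up to $n/(\ln n)^3$, so $\exp(O(np))$ is not bounded. The correct computation is that the exponent changes by $O(k)$, and since $pk=(1-\eps)\ln c$, the factor changes by $(1-p)^{O(k)}=\exp(O(pk))=c^{O(1)}\le n^{O(1)}$. In particular this factor can be as large as roughly $(1-p)^{-2k}\le n^2$ when moving off the diagonal $\ell+s=k$; the paper uses exactly the bound $(1-p)^{-k}\le\exp(pk)\le c\le n$ here. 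So the contribution is polynomial in $n$, not constant, and must be absorbed into your per-step loss. Once you correct this, the rest of your plan goes through and the total cost over at most three steps stays comfortably below $n^9$.
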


To prove this claim, it suffices to compare
terms on the ``boundary'', i.e.\ when $\ell=0$,
when $s=0$ or when $\ell+s =k$, with other terms.
For example, it is elementary to check that
for $0 \le \ell \le k-1$, we have
$ b_{\ell,0} \le n\cdot b_{\ell,1}$.
In other words, we may ``move'' from $s=0$
to $s=1$ at a cost of a multiplicative factor $n$.
By making at most three such moves,
each at a cost of at most $n^3$,
we can reach the interior of the region,
i.e.\ $1 \le \ell,s \le k-1$ and $\ell+s \le k-1$,
from any point on the boundary.
The full proof is included in Appendix~\ref{app:ratios} for completeness.

In particular, \Cref{prop:boundbratios} allows us to restrict our attention to the
case when $\ell,s,k-\ell-s \neq 0$.
\begin{prop} \label{prop:boundbklspositive}
If $1\le \ell,s \in \mathbb{N}$ and $\ell+s \le k-1$,
then
$$
b_{\ell,s} \le \exp\left(\frac{n}{3c}\right) .
$$
\end{prop}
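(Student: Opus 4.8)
The plan is to estimate $b_{\ell,s}$ from~\eqref{eq:bkls} by taking logarithms, applying Stirling's formula to every factorial, and then optimising the resulting expression over the feasible region $1\le \ell,s$, $\ell+s\le k-1$. Concretely, writing $k = (1-\eps)(\ln c)n/c$ and recalling $\ell,s = O(k) = o(n)$, the key observation is that the ``vertex-counting'' factors $((n-2k)!)^2/(n!\,(n-4k+2\ell+s)!)$ contribute a term of the form roughly $n^{-(2k-2\ell-s)}$ up to lower-order corrections, since $(n-2k)!/n! \approx n^{-2k}$ and $(n-2k)!/(n-4k+2\ell+s)! \approx n^{2k-2\ell-s}$ (here we use $k \ll n$, which holds because $c \ge 1$ forces $k \le n$, and more carefully $c \gg \eps^{-1}$ gives room). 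Meanwhile the combinatorial factor $2^{\ell+2s}(k!)^2/(\ell!\,s!\,((k-\ell-s)!)^2)$ and the factor $(k/((1-\eps)\ln c))^\ell = (n/c)^\ell$ and the penalty $(1-p)^{-\binom{2\ell+s}{2}+\ell} = q^{\binom{2\ell+s}{2}-\ell}$ all need to be folded in.

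After this reduction I would set $\ell = \alpha k$, $s = \beta k$ with $\alpha,\beta \in [0,1]$, $\alpha+\beta \le 1$, and write $\ln b_{\ell,s}$ as $k$ times a function $\Phi(\alpha,\beta)$ plus genuinely lower-order terms; the entropy terms coming from Stirling give the standard $-\alpha\ln\alpha - \beta\ln\beta - 2(1-\alpha-\beta)\ln(1-\alpha-\beta)$ shape, the factor $n^{-(2k-2\ell-s)}$ contributes $-(2-2\alpha-\beta)\ln n$ per unit of $k$, the $(n/c)^\ell$ contributes $\alpha(\ln n - \ln c)$, and the $q$-penalty contributes $-(\binom{2\ell+s}{2}-\ell)\ln(1-p) \approx p(2\alpha+\beta)^2 k^2/2 \cdot (1/k) = O(pk\cdot(\cdot))$ — crucially this is $O((\ln c)^2 n/c \cdot \text{poly}(\alpha,\beta))$, i.e.\ of the same order $n/c$ up to $\ln c$ factors, which is why we only claim the bound $\exp(n/(3c))$ rather than something smaller. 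The heart of the matter is then to check that the coefficient of $\ln n$ in $\Phi$ is non-positive throughout the region: that coefficient is (per unit $k$) $(-2+2\alpha+\beta) + \alpha = 3\alpha + \beta - 2$, which is negative exactly when $3\alpha+\beta < 2$; at the problematic corner this can fail, which is precisely why one expects the dominant contributions to come from small $\ell,s$ and why the bound is only $\exp(\Theta(n/c))$.

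The main obstacle, and where I would spend the most care, is controlling the regime where $3\alpha + \beta$ is close to or exceeds $2$ — equivalently $\ell$ and $s$ both a constant fraction of $k$. Here the $\ln n$ terms no longer save us and one must exploit the $(1-p)$-penalty $q^{\binom{2\ell+s}{2}-\ell}$, which for $\ell,s = \Theta(k)$ is $\exp(\Theta(p k^2)) = \exp(\Theta((\ln c)^2 n/c))$ in the exponent — this is a \emph{huge} negative term (since it is squared in $k$) that crushes everything else in that range. So the structure of the argument is: (a) in the region $3\alpha+\beta \le 2 - \delta$ for a small constant $\delta$, the $\ln n$ terms dominate and force $b_{\ell,s} \le 1$ (say) for $n$ large, using $k \ll n$; (b) in the complementary region, $\ell + s = \Theta(k)$, so the penalty term $q^{\binom{2\ell+s}{2}-\ell}$ in the denominator of $b_{\ell,s}$ — wait, it is in the numerator of~\eqref{eq:bkls} with a negative exponent, hence genuinely a penalty $(1-p)^{-(\cdot)} = q^{+(\cdot)}$, which makes $b_{\ell,s}$ \emph{large}, not small. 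So I must instead balance this against the entropy/vertex terms: in region (b) the factor $(1-p)^{-\binom{2\ell+s}{2}+\ell}$ is at most $\exp(O(p k^2)) = \exp(O((\ln c)^2 n/c))$, but this is \emph{not} bounded by $\exp(n/(3c))$ on its own, so region (b) cannot actually occur with $b_{\ell,s}$ large — one shows the vertex-counting factor $n^{-(2k-2\ell-s)}$ is then $n^{-\Theta(k)} = \exp(-\Theta((\ln c) n/c \cdot \ln n))$, which beats $\exp(O((\ln c)^2 n/c))$ since $\ln n \gg \ln c$ (as $c \le n/(\ln n)^3$). Thus in both regimes we win with room to spare, and the delicate point is making the case split precise and verifying that the constant in the exponent comes out below $1/3$; I would handle this by a direct (if tedious) substitution of the Stirling estimates, collecting the $\ln n$, the $\ln c$, and the $O(1)$ contributions to the exponent of $b_{\ell,s}$ separately, and checking the inequality $\ell,s$ by $\ell,s$ via the two-regime argument above.
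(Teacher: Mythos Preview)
Your overall strategy --- Stirling, then optimise over $(\ell,s)$ --- is exactly what the paper does, but the execution contains a chain of errors that invalidates the case analysis. First, the vertex-counting factor is $((n-2k)!)^2/(n!\,(n-4k+2\ell+s)!) \approx n^{-2k}\cdot n^{2k-2\ell-s} = n^{-(2\ell+s)}$, not $n^{-(2k-2\ell-s)}$ as you wrote. Second, you omitted the contribution of the combinatorial factor $(k!)^2/(\ell!\,s!\,((k-\ell-s)!)^2)$, which after Stirling carries a factor $k^{\ell+s}$ and hence, since $k = (1-\eps)(\ln c)n/c$, a further $n^{\ell+s}$. With both corrections the $\ln n$ coefficient becomes $-(2\ell+s)+\ell+(\ell+s)=0$: \emph{all powers of $n$ cancel}, and the expression for $\ln b_{\ell,s}$ is governed entirely by $c$, $\ln c$, $k$, $\ell$, $s$. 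Your case split at $3\alpha+\beta=2$ is therefore a split on a quantity that is identically zero, and both region (a) (``the $\ln n$ terms dominate'') and region (b) (``$\ln n \gg \ln c$ saves us'') are built on a false premise. Note also that the inequality $\ln n \gg \ln c$ is simply false at the top of the range $c = n/(\ln n)^3$, where $\ln c \sim \ln n$.

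The actual competition, once the $n$'s cancel, is between the penalty $(1-p)^{-\binom{2\ell+s}{2}+\ell} \le \exp\bigl((1+o(1))\tfrac{c}{2n}(2\ell+s)^2\bigr)$ and the negative term $-(2\ell+s)\ln\bigl(\tfrac{n-2k}{k}\bigr) \approx -(2\ell+s)\ln c$ coming from the ratio $(n-2k)/k \approx c/\ln c$. Since $(c/n)(2\ell+s)^2/2 \le (c/n)k(2\ell+s) = (1-\eps)(\ln c)(2\ell+s)$, the negative term wins by exactly the margin $\eps$ --- and you never invoke $\eps$, which is a sign that something is off. The paper's case split is accordingly at $2\ell+s$ large versus small relative to $k/\ln c$ (not at any constant threshold in $\alpha,\beta$): when $2\ell+s = \omega_c(k/\ln c)$ the $\eps(\ln c)(2\ell+s)$ gap crushes the remaining entropy terms $\ell\ln(k/\ell)+s\ln(k/s)=O(k)$; when $2\ell+s = O_c(k/\ln c)$ one checks directly that every term is $o_c(n/c)$. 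You would need to redo the bookkeeping along these lines.
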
{}

\begin{proof}

We will write
$P(n)$ for any term that is
polynomial in $n$ (i.e.\ there exists a polynomial $Q$ such that $\frac{1}{Q(n)} \le P(n) \le Q(n)$
for sufficiently large $n$).
Since $\ell,s,k-\ell-s\neq 0$, we may apply Stirling's approximation
to various terms in~\eqref{eq:bkls} and obtain
\begin{align*}
    b_{\ell,s} & = 2^{\ell+2s}
    \frac{P(k)\left(\frac{k}{e}\right)^{2k}}
    {\left(\frac{\ell}{e}\right)^\ell\left(\frac{s}{e}\right)^s\left(\frac{k-\ell-s}{e}\right)^{2(k-\ell-s)}}
    \cdot
    \frac{P(n)\left(\frac{n-2k}{e}\right)^{2(n-2k)}}
    {\left(\frac{n}{e}\right)^{n}\left(\frac{n-4k+2\ell+s}{e}\right)^{n-4k+2\ell+s}}
    \left(\frac{k}{(1-\eps)\ln c}\right)^{\ell}(1-p)^{-\binom{2\ell+s}{2}+\ell}
    \\
    & = P(n) 2^{\ell+2s}
    \frac{e^\ell k^{2k+\ell}}{\ell^\ell s^s (k-\ell-s)^{2(k-\ell-s)}} \cdot
    \frac{(n-2k)^{2(n-2k)}}{n^n (n-4k+2\ell+s)^{n-4k+2\ell+s}}
    \left(\frac{1}{(1-\eps)\ln c}\right)^{\ell}
    (1-p)^{-\binom{2\ell+s}{2}+\ell}\\
    & = P(n) \frac{\left(\frac{k}{\ell}\right)^\ell\left(\frac{k}{s}\right)^s \left(\frac{k}{k-\ell-s}\right)^{2(k-\ell-s)}\left(\frac{n-2k}{n}\right)^{n}\left(\frac{n-2k}{n-4k+2\ell+s}\right)^{n-4k+2\ell+s}}{\left(\frac{n-2k}{k}\right)^{2\ell+s}}
    2^{2s}\left(\frac{2e(1-p)}{(1-\varepsilon)\ln c}\right)^\ell(1-p)^{-\binom{2\ell+s}{2}}\\
    & = P(n) \frac{e^{\ell\ln(k/\ell)+s\ln(k/s)}(1+\frac{\ell+s}{k-\ell-s})^{2(k-\ell-s)}(1-\frac{2k}{n})^n
    (1+\frac{2k-2\ell-s}{n-4k+2\ell+s})^{n-4k+2\ell+s}}{(\frac{n-2k}{k})^{2\ell+s}(1-p)^{\binom{2\ell+s}{2}}}
    2^{2s}\left(\frac{2e(1-p)}{(1-\varepsilon)\ln c}\right)^{\ell}.\\
\end{align*}
Let us observe that $\frac{2e(1-p)}{(1-\eps)\ln c}\le 1$ for sufficiently large $c$,
so for an upper bound we may ignore the final term.
For what remains, we will use the inequalities $e^{-x-x^2}\leq 1-x \leq e^{-x-x^2/2}$
and $1+x \leq e^{x-x^2/2+x^3/3}\leq e^{x}$ which hold for all positive $x<1/2$,
and indeed the inequality $1+x \le e^x$ holds for any real number $x$.
Thus observing that $\ln(P(n))= \Theta(\ln n)$, we obtain
\begin{align}\label{exponent}
    \ln(b_{\ell,s}) & \le \Theta(\ln n) +
    \ell\ln\left(\frac{k}{\ell}\right)+s\ln\left(\frac{k}{s}\right)
    + 2(\ell+s) -\left(2k+\frac{2k^2}{n}\right) \nonumber \\
    & \hspace{2cm} + (2k-2\ell-s)-\frac{(2k-2\ell-s)^2}{2(n-4k+2\ell+s)} 
    + \frac{(2k-2\ell-s)^3}{3(n-4k+2\ell+s)^2}
    \nonumber \\
    & \hspace{2cm}
    +2s\ln 2
    -\ln\left(\frac{n-2k}{k}\right)(2\ell+s) 
    -\left(-\frac{c}{n}-\frac{c^2}{n^2}\right)\frac{(2\ell+s)^2}{2}
     \nonumber \\
    & \le \Theta(\ln n) +
    \ell\ln\left(\frac{k}{\ell}\right)+ s\ln\left(\frac{k}{s}\right)+s -\frac{2k^2}{n}
    -\frac{(2k-2\ell-s)^2}{2n} + \frac{3k^3}{n^2} \nonumber \\
    & \hspace{2cm} + 2s\ln 2 -\ln\left(\frac{n-2k}{k}\right)(2\ell+s)
    +\frac{c}{n}\left(1+\frac{\eps}{2}\right)\frac{(2\ell+s)^2}{2}  
    \nonumber \\
    & = F + o_c\left(\frac{1}{c}\right)n ,
\end{align}
where we define
\begin{align}\label{eq:fnkls}
    F=F(n,k,c,\ell,s) & := \ell\ln\left(\frac{k}{\ell}\right)+ s\ln\left(\frac{k}{s}\right)+s + 2s\ln 2
    +\frac{c}{n}\left(1+\frac{\eps}{2}\right)\frac{(2\ell+s)^2}{2}-\ln\left(\frac{n-2k}{k}\right)(2\ell+s).
\end{align}

Our aim is to bound $F$ from above ---
we will have two cases depending on how large $2\ell+s$ is.\\\\ 
\textbf{Case I: $2\ell+s=\omega_c\left(\frac{1}{\ln c}\right)k$.}\\
In this case the last term will outweigh all the positive terms in expression~\eqref{eq:fnkls}, so $F$ will be negative. Indeed,
\[ \ln\left(\frac{n-2k}{k}\right)(2\ell+s)=(1-o_c(1))(\ln c) (2\ell+s)\]
and we first claim that this expression dominates the first two positive terms.
To see this, set $g(x):=x\ln (k/x)$ and observe that $g'(x)=\ln(k/x)-1$ and $g''(x) = -1/x <0$.
Therefore $g$ attains its maximum when $\ln(k/x)-1=0$, i.e. when $x=k/e$,
which implies that for all $x$, $g(x)\le g(k/e) = k/e$.
Therefore
\[\ell\ln\left(\frac{k}{\ell}\right)+ s\ln\left(\frac{k}{s}\right)\leq \frac{2k}{e}\leq o_c(\ln c)(2\ell+s).\]
Since $s+2s\ln 2\leq 3k =o_c(\ln c)(2\ell+s)$ is also comparatively small,
the only term which remains is
$\frac{c}{n}\left(1+\frac{\eps}{2}\right)\frac{(2\ell+s)^2}{2}$
and by recalling that $2\ell+s\leq 2k$ we get that this term is
less than
$\frac{kc}{n}\left(1+\frac{\eps}{2}\right)(2\ell+s)
\leq \left(1-\frac{\eps}{2}\right)(\ln c)(2\ell+s)$.
Therefore $F$ is dominated by $-\frac{\eps}{2}(\ln c)(2\ell+s)$,
which means that $F\le 0$,
so in this case we are done.\\\\
\textbf{Case II: $2\ell+s=O_c\left(\frac{1}{\ln c}\right)k$.}\\
Here we split $F$ into two parts,
and separately prove that they are small:
\begin{align*}
    F_1&:=\ell\ln\left(\frac{k}{\ell}\right)
    +\frac{c}{n}\left(1+\frac{\eps}{2}\right)(2\ell^2+2\ell s)
    -\ln\left(\frac{n-2k}{k}\right)2\ell;\\
    F_2&:=s\ln\left(\frac{k}{s}\right)+s +2s\ln 2
    +\frac{cs^2}{2n}\left(1+\frac{\eps}{2}\right)
    -\ln\left(\frac{n-2k}{k}\right)s.
\end{align*}

\smallskip
\textbf{Upper bound on $F_1$:} We will again use the fact that,
by the arguments in Case~I, $g(x)=x\ln(k/x)$
is increasing if $x \le \frac{k}{c\ln c} \le k/e$.
We divide further into two subcases.

\textbf{Subcase (a): $\ell\leq \frac{k}{c\ln c}$.}
Here we simply ignore the negative term for an upper bound, and also observe
that $2\ell^2 + 2\ell s \le 2 (2\ell+s)\ell \le 2k\ell$, which gives
\begin{align*}
    F_1 \le \ell\ln\left(\frac{k}{\ell}\right)
    +\frac{c}{n}\left(1+\frac{\eps}{2}\right)2k\ell
    & \leq \frac{k}{c\ln c}\ln (c\ln c)+\frac{3k^2}{n\ln c} \\
    & \le k\left(\frac{2}{c} +\frac{3k}{n\ln c}\right)\\
    & \le \left(\frac{\ln c}{c}\right)n\cdot \frac{5}{c} \\
    & = o_c\left(\frac{1}{c}\right)n.
\end{align*}

\textbf{Subcase (b): $\ell> \frac{k}{c\ln c}$.}
In this case we bound the positive terms
in $F_1$ by
\begin{align*}
    \ell\ln\left(\frac{k}{\ell}\right)
    +\frac{c}{n}\left(1+\frac{\eps}{2}\right)(2\ell^2+2\ell s)
    & \le \ell\ln(c\ln c)+\frac{c}{n}3\ell(2\ell+s) \\
    & \leq  \frac{4\ln c}{3}\ell
    +\frac{3ck}{n}\ell O_c\left(\frac{1}{\ln c}\right) \\
    & \le \frac{4\ln c}{3}\ell
    +(\ln c)\ell O_c\left(\frac{1}{\ln c}\right) \\
    & \leq \frac{3}{2}\ell\ln c .
\end{align*}
Meanwhile, (the absolute value of) the negative term in $F_1$ is
$$
\ln\left(\frac{n-2k}{k}\right)2\ell = \ln\left(\frac{c}{(1-\eps)\ln c}-2\right)2\ell
\ge \frac{3}{2}(\ln c)\ell,
$$
and so in total we have $F_1\le 0$.

Thus in both subcases (a) and (b), we certainly have
$$
F_1 \le o_c\left(\frac{1}{c}\right)n.
$$

\par \textbf{Upper bound on $F_2$:} As before, we will consider two subcases.

\textbf{Subcase (a): $s\leq \frac{k}{c\ln c}$.}
Analogously as for $F_1$ we get that $F_2\leq o_c\left(\frac{1}{c}\right)n$.
More precisely, again using the fact that $g(x)=x\ln(k/x)$ is increasing for $x \le \frac{k}{c\ln c}$, we have
\begin{align*}
    F_2 & \le s\ln\left(\frac{k}{s}\right)+s +2s\ln 2
    +\frac{cs^2}{2n}\left(1+\frac{\eps}{2}\right) \\
    & \le \frac{k}{c\ln c}\left(\ln(c\ln c) + 1 + 2\ln 2
    + \frac{k}{(\ln c)n} \right)\\
    & \le \frac{n}{c^2}\left(2\ln c + \frac{1}{c} \right)\\
    & = o_c\left(\frac{1}{c}\right)n.
\end{align*}

\textbf{Subcase (b): $s> \frac{k}{c\ln c}$.}
First we will estimate the last term in $F_2$
using the assumption of Case II, which implies that
$s= O_c\left(\frac{1}{\ln c}\right)k$:

\begin{align}\label{eq:H2larges2}
      \ln\left(\frac{n-2k}{k}\right)s &=\ln\left(\frac{c}{(1-\eps)\ln c}
      \left(1-\frac{2(\ln c)(1-\eps)}{c}\right)\right)s \nonumber \\
         &\ge \ln\left(\frac{c}{\ln c}\right)s  +
            \ln\left(1-\frac{2(\ln c)(1-\eps)}{c}\right)s \nonumber \\
            & = \big(\ln c-\ln\ln c\big)s  -
            O_c\left(\frac{\ln c}{c}\right)O_c\left(\frac{1}{\ln c}\right)k\nonumber \\
    &=\big(\ln c-\ln\ln c \big)s-o_c\left(\frac{1}{c}\right)n.
\end{align}
Now set $\alpha:= \frac{cs}{k\ln c}$, so that
$s=\alpha \frac{\ln c}{c}k$ and we have
$\frac{1}{(\ln c)^2} < \alpha \leq O_c\left(\frac{c}{(\ln c)^2}\right)$,
since $\frac{k}{c\ln c} < s =O_c\left(\frac{k}{\ln c}\right)$. 
This implies that
\begin{align}\label{H_2bound}
F_2&= s\ln\left(\frac{c}{\alpha\ln c}\right)+s+2s\ln 2+\frac{cs^2}{2n}\left(1+\frac{\eps}{2}\right)-\big(\ln c-\ln\ln c-\ln (1-\eps)\big)s + o_c\left(\frac{1}{c}\right)n\nonumber\\
&\leq s\ln(1/\alpha)+s+2s\ln 2+\frac{cs^2}{n} + o_c\left(\frac{1}{c}\right)n.
\end{align}
First suppose $\alpha>c^{1/4}$. Using the assumption of Case II we have $\frac{cs^2}{n}=O_c(1)s$, and so we obtain
\[
F_2 \le s\left(-\frac{\ln c}{4}+O_c(1)\right) + o_c\left(\frac{1}{c}\right)n
\le o_c\left(\frac{1}{c}\right)n.
\]

On the other hand, if $\alpha<c^{1/4}$ then we have
$$
\frac{cs^2}{n}=c\frac{(\alpha k(\ln c)/c)^2}{n} \le \frac{(\ln c)^2 k^2}{n\sqrt{c}}
\le \frac{(\ln c)^4 n}{c^{5/2}} = o_c\left(\frac{1}{c}\right)n,
$$
and substituting this into \eqref{H_2bound}, we obtain
\begin{align*}
F_2 & \le s\left(\ln(1/\alpha) + 1 + 2\ln 2 \right)
+ o_c\left(\frac{1}{c}\right)n\\
& = \frac{k\ln c}{c}  \left(\alpha \ln(1/\alpha) + \alpha + 2\alpha\ln 2\right)
+ o_c\left(\frac{1}{c}\right)n,
\end{align*}
and this function is maximized for $\alpha=4$. Thus we have 
\[
    F_2\leq \frac{k\ln c}{c}\cdot 4 + o_c\left(\frac{1}{c}\right)n
    = o_c\left(\frac{1}{c}\right)n.
\]
Thus in all cases we have $F_1,F_2 \le o_c\left(\frac{1}{c}\right)n$,
and therefore also $F= F_1+F_2 = o_c\left(\frac{1}{c}\right)n$.
Substituting this into~\eqref{exponent}, we deduce that
$$
b_{\ell,s} \le \exp \left(o_c\left(\frac{1}{c}\right)n\right)
\le \exp\left(\frac{n}{3c}\right)
$$
as claimed.
\end{proof}{}

We have now collected all the auxiliary results we need, and we show that these
imply the various previously stated results.
First we show that \Cref{prop:boundbklspositive} implies \Cref{lem:boundbkls}.

\begin{proof}[ of \Cref{lem:boundbkls}]
Clearly \Cref{lem:boundbkls} is implied directly by \Cref{prop:boundbklspositive}
for any $\ell,s\ge 1$ such that $\ell+s\le k-1$.
The remaining terms can be dealt with by combining \Cref{prop:boundbklspositive}
and \Cref{prop:boundbratios},
together with the observation that
$$
n^{9}\exp\left(\frac{n}{3c}\right)\leq \exp\left(\frac{n}{2c}\right)
$$
because $c<\frac{n}{(\ln n)^3}$.
\end{proof}{}

We can now combine previous results to prove \Cref{lem:secondmoment}.

\begin{proof}[ of \Cref{lem:secondmoment}]
Applying \Cref{cor:explicitsum} and \Cref{lem:boundbkls},
we have
\begin{align*}
    \frac{\sum_{i\in I}\EE[X_i|X_1=1]}{\EE[Y_k]} = \sum_{\ell=0}^k \sum_{s=0}^{k-\ell} b_{\ell,s}
    \le \sum_{\ell=0}^k \sum_{s=0}^{k-\ell}\exp\left(\frac{n}{2c}\right)
    \le n^2 \exp\left(\frac{n}{2c}\right)
    \le \exp\left(\frac{n}{c}\right)
\end{align*}
as claimed.
\end{proof}

\section{Concluding remarks}

In this paper we asymptotically determined the size of the largest induced matching of $G(n,p)$. Using a similar approach, one could probably show the existence of large forests with components of bounded size, but the calculations get messy very quickly. When instead of a large induced matching we look for a fixed large induced bounded degree tree, much less is known. Nevertheless, based on the evidence listed in the introduction, we propose the following conjecture:

\begin{conj}\label{conj:trees}
Let $\Delta\geq 2$, let $\eps>0$ and let $p=p(n)$ be a function such that $\frac{C}{n}<p<0.99$ where $C=C(\Delta,\eps)$ is sufficiently large. Let $T$ be a tree with 
$(2-\eps)\log_q(np)$ vertices and maximum degree $\Delta$, where $q=1/(1-p)$. Then with high probability $G(n,p)$ contains $T$ as an induced subgraph.
\end{conj}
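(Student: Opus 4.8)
The plan is to treat the dense and the genuinely sparse ranges of $p$ separately. For $p\ge n^{-1/2}(\ln n)^2$ the conjecture already follows from the theorem of the second author~\cite{draganic2020large}: from the tree $T$ of the conjecture, form $T'$ by attaching to one of its leaves a bare path with enough new vertices that $|V(T')|=(2-o(1))\log_q(np)$, noting that $\Delta(T')\le\max\{\Delta,2\}=\Delta$; since $T$ is the induced subgraph $T'[V(T)]$, every induced copy of $T'$ in $G(n,p)$ contains an induced copy of $T$, and~\cite{draganic2020large} supplies such a copy of $T'$ whp. It therefore remains to handle $C/n\le p\le n^{-1/2}(\ln n)^2$, and from now on we take $p=c/n$ and write $t:=|V(T)|=(2-\eps)\log_q(np)=\Theta\big(\tfrac{\ln c}{c}n\big)$.

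In this regime the obvious attempt is to mimic the proof of \Cref{main}. Let $Z=Z(G)$ be the largest number of pairwise vertex-disjoint and pairwise non-adjacent induced copies of $T$ in $G$, so that $G$ contains an induced copy of $T$ precisely when $Z\ge1$. Exactly as for $\M(G)$ in \Cref{sec:azuma}, $Z$ is Lipschitz and $f$-certifiable, now with $f(m)=tm$, since $m$ such copies occupy $tm$ vertices and fixing the neighbourhoods of those $tm$ vertices already forces $Z\ge m$. One would then like to combine a Talagrand bound of the shape $\pr\big(Z\le m_0-\lambda\sqrt{tm_0}\big)\cdot\pr(Z\ge m_0)\le\exp(-\lambda^2/4)$ with a Paley--Zygmund lower bound on $\pr(Z\ge m_0)$ coming from the second moment method, in exactly the way \Cref{probability1} and \Cref{probability2} are combined to prove \Cref{main}.

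The hard part --- and the reason the conjecture is still open --- is that this scheme does not close. The crux is that $t=|V(T)|$ is itself large, of order $\tfrac{\ln c}{c}n$, so the certificate for $Z\ge m_0$ costs of order $tm_0$ coordinates while there are only $Z\le n/t=\Theta\big(\tfrac{c}{\ln c}\big)$ disjoint copies to exploit; hence in Talagrand's inequality one can only afford $\lambda=O\big(\sqrt{m_0/t}\big)$, and the resulting bound $\exp(-\lambda^2/4)$ is never small enough to beat the (unavoidably exponentially small) second-moment lower bound on $\pr(Z\ge m_0)$. Informally, the proof of \Cref{main} works because $\M(G)$ is a \emph{large} quantity with a cheap, $O(1)$-per-unit certificate, and a single large tree offers no such quantity. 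Going beyond this appears to require a new ingredient: for instance a variance reduction for the number of induced copies of $T$ in $G(n,c/n)$ --- say, by conditioning on the absence of the atypical dense local configurations that dominate that variance, which is the same difficulty that already obstructs the sparse cases of independent sets and induced paths --- or an absorption or sprinkling argument that threads $T$ along an induced-path backbone as provided by {\L}uczak~\cite{luczak1993size}. Controlling this variance is, in our view, the real obstacle.
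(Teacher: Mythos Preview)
The statement you are addressing is a \emph{conjecture}: the paper does not prove it, but poses it as an open problem in the concluding remarks. There is therefore no ``paper's own proof'' to compare against. The paper merely records, as you do, that the range $p\ge n^{-1/2}(\ln n)^2$ is already covered by~\cite{draganic2020large}.

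Your treatment of the dense range is correct and a shade more explicit than what the paper writes. The trick of padding $T$ to a tree $T'$ on $(2-o(1))\log_q(np)$ vertices by attaching a bare path, and then reading off an induced copy of $T$ inside any induced copy of $T'$, is valid: since $T'$ is a tree and $T$ is a subtree, $T'[V(T)]=T$, so $G[V(T)]=T$ whenever $G[V(T')]=T'$.

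For the sparse range, however, what you have written is not a proof but a (largely accurate) explanation of why the method of this paper does not extend. Your Lipschitz and $f$-certifiability claims for the packing number $Z$ with $f(m)=tm$ are fine, and your scaling argument is essentially right: since $Z\le n/t$ deterministically and $t=\Theta\!\big(\tfrac{\ln c}{c}n\big)$, the largest $\lambda$ one can use while still forcing $m_0-\lambda\sqrt{tm_0}<1$ satisfies $\lambda^2=O(m_0/t)=O\big(n/t^2\big)=O\big(c^2/(n(\ln c)^2)\big)$, which is $o(1)$ throughout most of the target range, so Talagrand yields nothing. This is a genuine obstruction, not a technicality, and it is precisely why the paper leaves the statement as a conjecture. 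But recognising the obstruction is not the same as resolving it: your final paragraph sketches possible avenues (variance reduction, absorption along an induced path backbone) without carrying any of them out. As it stands, the sparse case in your proposal has a gap that is exactly the open problem.
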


If true, this conjecture would be asymptotically best possible.
As mentioned in the introduction, the result was already proved for $p\geq n^{-1/2}(\ln n)^2$ in \cite{draganic2020large}.

\bibliographystyle{siam}
\bibliography{ref}

\begin{thebibliography}{10}

\bibitem{alon2004probabilistic}
{\sc N.~Alon and J.~H. Spencer}, {\em The probabilistic method}, John Wiley \&
  Sons, 2004.

\bibitem{bollobas1976cliques}
{\sc B.~Bollob{\'a}s and P.~Erd\H{o}s}, {\em Cliques in random graphs}, in
  Mathematical Proceedings of the Cambridge Philosophical Society, vol.~80,
  Cambridge University Press, 1976, pp.~419--427.

\bibitem{clark2001strong}
{\sc L.~Clark}, {\em The strong matching number of a random graph},
  Australasian Journal of Combinatorics, 24 (2001), pp.~47--58.

\bibitem{de1986induced}
{\sc W.~F. de~la Vega}, {\em Induced trees in sparse random graphs}, Graphs and
  Combinatorics, 2 (1986), pp.~227--231.

\bibitem{de1996largest}
\leavevmode\vrule height 2pt depth -1.6pt width 23pt, {\em The largest induced
  tree in a sparse random graph}, Random Structures \& Algorithms, 9 (1996),
  pp.~93--97.

\bibitem{draganic2020large}
{\sc N.~Dragani\'c}, {\em Large induced trees in dense random graphs}, arXiv
  preprint arXiv:2004.02800,  (2020).

\bibitem{dutta2018induced}
{\sc K.~Dutta and C.~Subramanian}, {\em On induced paths, holes and trees in
  random graphs}, in 2018 Proceedings of the Fifteenth Workshop on Analytic
  Algorithmics and Combinatorics (ANALCO), SIAM, 2018, pp.~168--177.

\bibitem{erdos1983trees}
{\sc P.~Erd\H{o}s and Z.~Palka}, {\em Trees in random graphs}, Discrete
  Mathematics, 46 (1983), pp.~145--150.

\bibitem{frieze1990independence}
{\sc A.~M. Frieze}, {\em On the independence number of random graphs}, Discrete
  Mathematics, 81 (1990), pp.~171--175.

\bibitem{frieze1987large}
{\sc A.~M. Frieze and B.~Jackson}, {\em Large holes in sparse random graphs},
  Combinatorica, 7 (1987), pp.~265--274.

\bibitem{frieze1987large2}
\leavevmode\vrule height 2pt depth -1.6pt width 23pt, {\em Large induced trees
  in sparse random graphs}, Journal of Combinatorial Theory, Series B, 42
  (1987), pp.~181--195.

\bibitem{janson2011random}
{\sc S.~Janson, T.~{\L}uczak, and A.~Ruci\'nski}, {\em Random graphs}, vol.~45,
  John Wiley \& Sons, 2011.

\bibitem{kuvcera1987large}
{\sc L.~Ku{\v{c}}era and V.~R{\"o}dl}, {\em Large trees in random graphs},
  Commentationes Mathematicae Universitatis Carolinae, 28 (1987), pp.~7--14.

\bibitem{luczak1993size}
{\sc T.~{\L}uczak}, {\em The size of the largest hole in a random graph},
  Discrete mathematics, 112 (1993), pp.~151--163.

\bibitem{luczak1988maximal}
{\sc T.~{\L}uczak and Z.~Palka}, {\em Maximal induced trees in sparse random
  graphs}, Discrete Mathematics, 72 (1988), pp.~257--265.

\bibitem{matula1976largest}
{\sc D.~W. Matula}, {\em The largest clique size in a random graph}, Department
  of Computer Science, Southern Methodist University, 1976.

\bibitem{palka1986order}
{\sc Z.~Palka and A.~Ruci{\'n}ski}, {\em On the order of the largest induced
  tree in a random graph}, Discrete applied mathematics, 15 (1986), pp.~75--83.

\bibitem{rucinski1987induced}
{\sc A.~Ruci{\'n}ski}, {\em Induced subgraphs in a random graph}, in
  North-Holland Mathematics Studies, vol.~144, Elsevier, 1987, pp.~275--296.

\bibitem{suen1992large}
{\sc W.~S. Suen}, {\em On large induced trees and long induced paths in sparse
  random graphs}, Journal of Combinatorial Theory, Series B, 56 (1992),
  pp.~250--262.

\end{thebibliography}

\newpage

\begin{appendices}

\section{Proof of \Cref{prop:boundbratios}}\label{app:ratios}

In order to prove the claim,
we will show that
for any $0\le \ell,s \le k-1$ we have:
\begin{align*}
    b_{\ell,0} & \le n\cdot b_{\ell,1};\\
    b_{0,s} & \le n^3\cdot b_{1,s};\\
    b_{\ell,k-\ell} & \le n^2\cdot b_{\ell,k-\ell-1},
\end{align*}
and furthermore:
\begin{align*}
    b_{k,0} & \le n^2\cdot b_{k-1,0};\\
    b_{k-1,0} & \le n^2\cdot b_{k-2,0}.
\end{align*}
This suffices since for any $\ell,s$,
by concatenating at most three such inequalities
we may relate $b_{\ell,s}$ to some
$b_{i,j}$ where $1 \le i,j \le k-1$ and $i+j \le k-1$.
For example, we have 
$$
b_{k-1,1} \le n^2 \cdot b_{k-1,0}
\le n^2 \cdot n^2 \cdot b_{k-2,0}
\le n^2 \cdot n^2 \cdot n \cdot b_{k-2,1}.
$$
(In fact, this is the most convoluted concatenation necessary -- all others are far more natural.)
The ``cost'' associated with each inequality is at most $n^3$,
and therefore the cost of concatenating three is at most
$(n^3)^3 = n^9$, as claimed.

Now to prove the inequalities, observe from~\eqref{eq:bkls} that
\begin{align*}
\frac{b_{\ell,0}}{b_{\ell,1}} & = \frac{1}{4}\frac{((k-\ell-1)!)^2}{((k-\ell)!)^2}
\frac{(n-4k+2\ell+1)!}{(n-4k+2\ell)!} (1-p)^{\binom{2\ell+1}{2}-\binom{2\ell}{2}}\\
& = \frac{1}{4} \frac{n-4k+2\ell+1}{(k-\ell)^2}(1-p)^{2\ell}\\
& \le n.
\end{align*}
Similarly,
\begin{align*}
    \frac{b_{0,s}}{b_{1,s}} & = \frac{1}{2} \frac{1}{(k-s)^2} (n-4k+s+2)(n-4k+s+1) \frac{(1-\eps)\ln c}{k} (1-p)^{2s}\\
    & \le n^2 \ln c \le n^3.
\end{align*}
To prove the remaining inequalities we will use the bound
$$(1-p)^{-k} \le \exp(pk) = \exp(\ln c) \le n.$$
Thus we obtain
\begin{align*}
    \frac{b_{\ell,k-\ell}}{b_{\ell,k-\ell-1}} & = 4 \frac{1}{(k-\ell)} \frac{1}{(n-3k+\ell)}
    (1-p)^{-(k+\ell-1)}
    \le \frac{5}{n} (1-p)^{-2k}
    \le n^2.
\end{align*}
For the final two inequalities, observe that
\begin{align*}
    \frac{b_{k,0}}{b_{k-1,0}} & = 2\frac{1}{k}\frac{1}{(n-2k)(n-2k-1)}\frac{k}{(1-\eps)\ln c}(1-p)^{4-4k}\\
    & \le \frac{3}{n^2 \ln c} \cdot n^4 \le n^2,
\end{align*}
while
\begin{align*}
    \frac{b_{k-1,0}}{b_{k-2,0}}
    & = 2 \frac{4}{k-1}
        \frac{1}{(n-2k-2)(n-2k-3)}
        \frac{k}{(1-\eps) \ln c}
        (1-p)^{8-4k} \\
    & \le \frac{9}{n^2 \ln c} \cdot n^4 \le n^2,
\end{align*}
as required.\qed

\section{Proof of of \Cref{main} for large $p$}\label{app:secondmoment}

In this appendix we prove \Cref{main} in the
case when $\frac{1}{(\ln n)^3} \le p = o(1)$.
As before we let
$\eps_0>0$, let $\eps := \frac{\eps_0}{3}$ and let
$p=c/n$ where now $c=c(n)$ satisfies
$\frac{n}{(\ln n)^3} \le c =o(n)$. 
As before, let
\begin{equation*}
k:=\frac{(1-\eps)\ln c}{c}n.
\end{equation*}
Note that we now have $\ln n \ll k \le (\ln n)^4$.

We first observe that some calculations from the proof for smaller $p$ are still valid. In particular, from the proof of \Cref{probability2}
we have the expression
$$
\EE[Y_k^2] = \EE[Y_k] \sum_{i \in I} \EE[X_i | X_1=1],
$$
and therefore by \Cref{cor:explicitsum} we have
\begin{equation}\label{eq:momentratio}
\frac{\EE[Y_k^2]}{\EE[Y_k]^2} = \sum_{\ell=0}^k \sum_{s=0}^{k-\ell}b_{\ell,s},
\end{equation}
where as before we have
\begin{equation}\label{eq:blsdef2}
    b_{\ell,s}  :=  2^{\ell+2s}\frac{(k!)^2}{\ell!s!((k-\ell-s)!)^2}\cdot\frac{((n-2k)!)^2}{n!(n-4k+2\ell+s)!}\left(\frac{k}{(1-\eps)\ln c}\right)^{\ell}(1-p)^{-\binom{2\ell+s}{2}+\ell}.
\end{equation}
Observe that $2^\ell/\ell! \le 2$ and $4^s/s! \le 64/6 \le 11$ for all $\ell,s \in \mathbb{N}_0$, and so we have
\begin{equation}\label{eq:powerandfactorial}
    \frac{2^{\ell+2s}}{\ell! s!} \le 22.
\end{equation}
Furthermore, we obtain
\begin{equation}\label{eq:klsfactorials}
\frac{(k!)^2}{((k-\ell-s)!)^2} \le k^{2(\ell+s)}
\end{equation}
and 
\begin{align}\label{eq:nfactorials}
\frac{((n-2k)!)^2}{n!(n-4k+2\ell+s)!} & \le \frac{1}{\left(1+\frac{2k}{n-2k}\right)^{2k}\left(1-\frac{2k}{n-2k}\right)^{2k} (n-4k)^{2\ell+s}} \nonumber\\
& = \left(1-\frac{(2k)^2}{(n-2k)^2}\right)^{-2k} \frac{\left(1-\frac{4k}{n}\right)^{-(2\ell+s)}}{n^{2\ell+s}} \nonumber \\
&\le \exp \left(\frac{(2k)^3}{(n-2k)^2} + \frac{4k(2\ell+s)}{n}\right) \frac{1}{n^{2\ell+s}} \nonumber\\
& = (1+o(1)) \frac{1}{n^{2\ell+s}}.
\end{align}

Finally, we have 
\begin{equation}\label{eq:n/c}
\frac{k}{(1-\eps)\ln c} = \frac{n}{c}
\end{equation}
and
\begin{equation}\label{eq:1-p}
    (1-p)^{-\binom{2\ell+s}{2}+\ell} \le \exp \left( p\frac{(2\ell+s)^2)}{2} \right) = \exp\left(\frac{c}{n}\left(2\ell(\ell+s) +\frac{s^2}{2}\right)\right).
\end{equation}
Substituting Equations~\eqref{eq:powerandfactorial}--\eqref{eq:1-p} into~\eqref{eq:blsdef2}, we obtain

\begin{align*}
b_{\ell,s}
& \le 22 k^{2(\ell+s)} (1+o(1)) \frac{1}{n^{2\ell+s}}
\left(\frac{n}{c}\right)^\ell
\exp\left(\frac{c}{n}\left(2\ell(\ell+s) +\frac{s^2}{2}\right)\right) \\
& \le 23\left(\frac{k^2}{nc}\exp\left(\frac{2c(\ell+s)}{n}\right)\right)^\ell
\left(\frac{k^2}{n}\exp\left(\frac{cs}{2n}\right)\right)^s \\
& \le 23\left(\frac{(\ln n)^8}{nc}\exp\left(\frac{2ck}{n}\right)\right)^\ell
\left(\frac{(\ln n)^8}{n}\exp\left(\frac{ck}{2n}\right)\right)^s \\
& = 23\left(\frac{(\ln n)^8}{nc}c^{2(1-\eps)}\right)^\ell
\left(\frac{(\ln n)^8}{n}c^{\frac{1}{2}(1-\eps)}\right)^s \\
& \le 23 \left(\frac{(\ln n)^8}{n^{2\eps}}\right)^\ell
\left(\frac{(\ln n)^8}{\sqrt{n}}\right)^s \\
& \le n^{-\eps(\ell+s)} \qquad \mbox{if } \ell+s >0.
\end{align*}

For $\ell=s=0$, we need to be a little more careful --- here we can directly observe that
$$
b_{0,0} = 1 \cdot \frac{(k!)^2}{1\cdot (k!)^2}
\cdot \frac{((n-2k)!)^2}{n!(n-4k)!} \cdot 1 \cdot 1 \stackrel{\eqref{eq:nfactorials}}{=} 1+o(1).
$$
Thus~\eqref{eq:momentratio} becomes
\begin{align*}
    \frac{\EE[Y_k^2]}{\EE[Y_k]^2} \le (1+o(1)) \sum_{\ell=0}^k \sum_{s=0}^{k-\ell}n^{-\eps(\ell+s)}
    & = (1+o(1)) \left(\sum_{\ell=0}^k n^{-\eps\ell}\right) \left(\sum_{s=0}^{k-\ell} n^{-\eps s}\right)
    = (1+o(1)).
\end{align*}
Therefore by Chebyshev's inequality we have
$$
\Pr(Y_k = 0) \le \frac{\mathrm{var[Y_k]}}{\EE[Y_k]^2}
= \frac{\EE[Y_k^2]-\EE[Y_k]^2}{\EE[Y_k]^2}
= o(1),
$$
as required.

\end{appendices}

\end{document}